\newtheorem{theorem}{Theorem}
\newtheorem{lemma}{Lemma}
\newtheorem{proposition}{Proposition}
\newtheorem{conj}{Conjecture}
\theoremstyle{definition}
\newtheorem{example}{Example}
\theoremstyle{remark}
\title[Variance component models]{Maximum likelihood
  degree of variance component models}
\author[Gross, Drton, Petrovi\'c]{Elizabeth Gross, Mathias Drton \and Sonja
  Petrovi\'c} 
\begin{document}

\begin{abstract}
  Most statistical software packages implement numerical strategies
  for computation of maximum likelihood estimates in random effects
  models.  Little is known, however, about the algebraic complexity of
  this problem.  For the one-way layout with random effects and
  unbalanced group sizes, we give formulas for the algebraic degree of
  the likelihood equations as well as the equations for restricted
  maximum likelihood estimation.  In particular, the latter approach
  is shown to be algebraically less complex.  The formulas are
  obtained by studying a univariate rational equation whose solutions
  correspond to the solutions of the likelihood equations.  Applying
  techniques from computational algebra, we also show that balanced
  two-way layouts with or without interaction have likelihood
  equations of degree four.  Our work suggests that algebraic methods
  allow one to reliably find global optima of likelihood functions of
  linear mixed models with a small number of variance components.
\end{abstract}

\keywords{Analysis of variance, linear mixed model, maximum
  likelihood, restricted maximum likelihood, variance component}
\maketitle

\section{Introduction}
\label{sec:intro}

Linear models with fixed and random effects are widely used for
dependent observations.  Such mixed models are typically fit using
likelihood-based techniques, and the necessary optimization problems
can be solved using the numerical methods implemented in various
statistical software packages, as discussed, for instance, in
\cite{Faraway:2006}.  Such software typically takes into account that
the variance parameters are nonnegative.  However,
general-purpose optimization procedures do not give any guarantees
that a global optimum is found; compare Section 1.8 in
\cite{jiang:2007}.  It can thus be appealing to compute maximum
likelihood (ML) estimates algebraically.  Since linear mixed models
have rational likelihood equations, this involves careful clearing of
denominators and applying symbolic and specialized numerical
techniques to determine all solutions of the resulting polynomial
system.  An explanation of what we mean by careful clearing of
denominators is given in \cite[Chap.~2]{Drton:book}.  While solving
likelihood equations algebraically may not be feasible in large models
with several random factors, modern computational algebra does allow
one to fully understand the likelihood surface in practically relevant
settings.

The main contribution of this paper is a study of the algebraic
complexity of ML estimation in the unbalanced one-way layout with
random effects.  This model concerns a collection of grouped
observations
\begin{equation}
  \label{eq:model-oneway}
Y_{ij} = \mu+\alpha_i + \varepsilon_{ij}, \qquad i=1,\dots,q, \quad
j=1,\dots,n_i.
\end{equation}
The overall mean $\mu\in\mathbb{R}$ is a fixed (`non-random') but
unknown parameter.  The random effects $\alpha_i$ and the error terms
$\varepsilon_{ij}$ are mutually independent normal random variables.
More precisely, $\alpha_i\sim\mathcal{N}(0,\tau)$ and
$\varepsilon_{ij}\sim\mathcal{N}(0,\omega)$, where $\tau$ and $\omega$
denote the common variances of the random effects and the error terms,
respectively.  Clearly, the distribution of observation $Y_{ij}$ is
$\mathcal{N}(\mu,\tau+\omega)$, and two observations $Y_{ij}$ and
$Y_{ik}$ from the $i$th group are dependent with covariance $\tau$.  A
detailed discussion and examples of applications of this specific
model can be found, for instance, in Chapter 3 of \cite{Searle:1992}
and in Chapter 11 of \cite{Sahai:II}.

The covariance matrix of the joint multivariate normal distribution
for all $Y_{ij}$ defined by (\ref{eq:model-oneway}) is the product of
the scalar $\omega$ and a matrix that is a function of the variance
ratio $\theta=\tau/\omega$.  Therefore, when $\theta$ is known, the
likelihood equations for $\mu$ and $\omega$ are of the type
encountered in generalized least squares calculations, with a unique
solution that is a rational function of the data and the known value
of $\theta$.  We may thus eliminate $\mu$ and $\omega$ from the
likelihood equations, which then reduce to a single univariate
equation.  Before turning to a first example, we remark that we always
tacitly assume suitable sample size conditions to be satisfied such
that ML estimates exist.  In particular, we assume there to be $q\ge
2$ groups with at least one group of size $n_i\ge 2$.  A definitive
answer to the existence problem in linear mixed models is given in
\cite{Demidenko:1999} who also treat restricted maximum likelihood
(REML) estimation; see \cite{McCullagh:1989} for an introduction to
this technique.

\begin{example}
  \label{ex:dyestuff}
  Textbook data from \cite[\S6.4]{Davies:1972} give the yield of
  dyestuff from $5$ different preparations from each of $q=6$
  different batches of an intermediate product; the data are also
  available in the R package {\tt lme4}.  The layout is balanced, that
  is, all batch sizes are equal, here $n_i=5$.  In this case, the
  likelihood equations are well-known to be equivalent to a linear
  equation system, and the ML estimators are rational functions of the
  observations $Y_{ij}$.  In terminology we will use later on, balanced
  one-way layouts have ML degree one.  Exactly the same is true for
  REML.
  
  A different picture emerges in the unbalanced case, when the batch
  sizes are not all equal.  For illustration, we remove the first,
  second and sixth observation from the data.  The first batch then
  only comprises $n_1=3$ preparations, and the second batch only
  $n_2=4$.  The remaining batches are unchanged with $n_i=5$ for $i\ge
  3$.  In this unbalanced case, the solutions of the likelihood
  equations correspond to the solutions of the polynomial equation
  \begin{multline}
    \label{eq:ml-dyestuff}
    -245488320000\,\theta^7 -277109078400\,\theta^6 -58814614680\,\theta^5
    +54052612853\,\theta^4 \\
    +37792395524\,\theta^3 +10086075110\,\theta^2
    +1279832076\,\theta+64175517 = 0.
  \end{multline}
  As the large integers suggest, this equation is exact given the
  input.  However, the measurements that enter the computation are, of
  course, rounded.
  
  Numerical optimization using the R package {\tt lme4} yields a local
  maximum of the likelihood function that corresponds to
  $\theta\approx 0.5585$.  We may check whether this local maximum is
  unique, or at least a global optimum, by finding all roots of the
  above univariate polynomial.  This is a task that can be done
  reliably in computer algebra systems.  However, such a computation
  is not needed here.
  The polynomial in (\ref{eq:ml-dyestuff}) has exactly one sign change
  in its coefficient sequence.  Hence, by Descartes' rule of signs, it
  has precisely one positive real root.  The mere construction of the
  polynomial thus reveals that the local maximum we computed is the
  unique local (and global) maximum of the likelihood function; recall
  that the parameter $\theta$ is restricted to be nonnegative.
  
  A similar story unfolds for REML estimation.  The only difference is
  that the degree of the relevant polynomial drops to five:
  \begin{multline}
    \label{eq:reml-dyestuff}
    -17047800000\,\theta^5-6811774200\,\theta^4
    +5505084700\,\theta^3\\ 
    +4048254212\,\theta^2
    +897954164\,\theta+67458244 = 0.
  \end{multline}
  We note that equations (\ref{eq:ml-dyestuff}) and
  (\ref{eq:reml-dyestuff}) cannot be solved by radicals.  The Galois
  groups are the symmetric groups $S_7$ and $S_5$, respectively.  
\end{example}

It is natural at this point to ask for the {\em maximum likelihood
  degree} of the one-way layout as a function of the number of groups
$q$ and the group sizes $n_1,\dots,n_q$.  The ML degree is the number
of complex solutions to the likelihood equations when the data are
{\em generic}.  Indeed, the number of complex solutions is constant
with probability one, and a data set is generic if it is not part of
the null set for which the number of complex solutions is different.
The REML degree is defined in just the same way, but starting
from different equations.
Either degree measures the algebraic complexity of the computation of
the estimates.  In Example~\ref{ex:dyestuff}, it is simply the degree
of a univariate polynomial in $\theta=\tau/\omega$ whose roots yield
the possibly complex vectors $(\mu,\omega,\tau)$ that
solve the likelihood equations.  For more background on ML degrees,
see
\cite{Hosten:2005,Catanese:2006,Buot:2007,Drton:book,Sturmfels:2009,Hosten:2010}. 

Our main result answers the above question.
Theorem~\ref{thm:MAIN}, which we prove in later sections, gives
formulas for both the ML and the REML degree of possibly unbalanced
one-way layouts and offers a direct comparison of the algebraic
complexity of the two approaches.  The theorem is conveniently stated
using a notion of multiplicities.  Suppose
$v=(v_1,\dots,v_q)\in\mathbb{Z}^q$ is a tuple of integers.  If $v$ has
$M$ distinct entries, then the multiplicities of $v$ form the integer
multiset $\{m_1,\dots,m_M\}$, where $m_j$ counts how often the $j$th
distinct entry of $v$ appears among all entries of $v$.

\begin{theorem}
  \label{thm:MAIN}
  Consider a one-way layout with random effects for $q$ groups that
  are of sizes $n_1,\dots,n_q$.  Suppose $M$ of the group sizes are
  distinct, with associated multiplicities $m_1,\dots,m_M$.  Let
  $M_2=\#\{j: m_j\ge 2\}$.  Then the ML degree is $3M+ M_2-3$, and the
  REML degree is $2M+2M_2-3$.  The ML degree exceeds the REML degree
  unless $M_2=M$, in which case equality holds.
\end{theorem}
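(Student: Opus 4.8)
The plan is to eliminate the location and scale parameters from the likelihood equations, reducing the problem to counting roots of a single univariate rational function of the variance ratio $\theta=\tau/\omega$, and then to compute the degree of that function after carefully clearing denominators. The covariance matrix is block diagonal with $i$th block $\omega(I_{n_i}+\theta J_{n_i})$, so that $\det(I_{n_i}+\theta J_{n_i})=1+n_i\theta$ and $(I_{n_i}+\theta J_{n_i})^{-1}=I_{n_i}-\frac{\theta}{1+n_i\theta}J_{n_i}$. With the weights $w_i=w_i(\theta)=n_i/(1+n_i\theta)$, the total size $N=\sum_i n_i$, the within-group sum of squares $S=\sum_{i,j}(Y_{ij}-\bar Y_{i\cdot})^2$, and the weighted mean $\hat\mu(\theta)=\big(\sum_i w_i\bar Y_{i\cdot}\big)/\big(\sum_i w_i\big)$, the equations for $\mu$ and $\omega$ are solved by $\hat\mu(\theta)$ and $\hat\omega(\theta)=N^{-1}\big(S+\sum_i w_i(\bar Y_{i\cdot}-\hat\mu)^2\big)$. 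Substituting back, and using $w_i'=-w_i^2$ together with the envelope identity $\frac{d}{d\theta}\min_\mu\sum_i w_i(\bar Y_{i\cdot}-\mu)^2=-\sum_i w_i^2(\bar Y_{i\cdot}-\hat\mu)^2$, the remaining stationarity condition becomes $N\sum_i w_i^2(\bar Y_{i\cdot}-\hat\mu)^2=\big(\sum_i w_i\big)\big(S+\sum_i w_i(\bar Y_{i\cdot}-\hat\mu)^2\big)$. For REML one first projects off $\mathbf 1_N$; this replaces $N$ by $N-1$ and adds $\log\big(\sum_i w_i\big)$ to the objective, so the stationarity condition acquires one further rational summand. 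Grouping the $q$ groups by their distinct sizes $N_1,\dots,N_M$ (groups of equal size have equal weights) and setting $P(\theta)=\prod_{j=1}^M(1+N_j\theta)$, it is convenient to work with the polynomials $\tilde p_k=P^k\sum_i w_i^k$, $\tilde a_k=P^k\sum_i w_i^k\bar Y_{i\cdot}$, $\tilde b_k=P^k\sum_i w_i^k\bar Y_{i\cdot}^2$, each of degree $k(M-1)$ since $Pw_i$ is a polynomial of degree $M-1$.

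Multiplying through by $P^4$ turns the ML condition into a polynomial equation $F(\theta)=0$ and the REML condition into $F_R(\theta)=0$, where $F$ and $F_R$ are explicit combinations of the $\tilde p_k,\tilde a_k,\tilde b_k$ and $P$. In each case the unique top-degree contribution comes from the term carrying $S$ (namely $-S\,\tilde p_1^3P$ for ML), which has degree $4M-3$ with leading coefficient a nonzero constant times $S$, so $\deg F=\deg F_R=4M-3$ for generic data. The crux is that these polynomials carry spurious factors: I expect $\prod_{j:\,m_j=1}(1+N_j\theta)$ to divide $F$, and $\prod_{j:\,m_j=1}(1+N_j\theta)^2$ to divide $F_R$. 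Indeed, fixing $j$ with $m_j=1$ and substituting $\theta=-1/N_j$, each $\tilde p_k,\tilde a_k,\tilde b_k$ collapses to the contribution of the groups of size $N_j$, of which there is exactly one, say group $i_j$, and one computes $\hat\mu(-1/N_j)=\bar Y_{i_j\cdot}$; a short calculation then gives $F(-1/N_j)=0$, and for REML both natural blocks of $F_R$ vanish to order at least two at $-1/N_j$ — one because $\tilde p_1^2-\tilde p_2$ and $S\tilde p_1P+\tilde b_1\tilde p_1-\tilde a_1^2$ each vanish there, the other because of the first-order agreement $\hat\mu(-1/N_j)=\bar Y_{i_j\cdot}$ together with the vanishing of the remaining weights. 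A further check shows that generically these vanishing orders are exactly $1$ and $2$, since only the $S$-terms contribute to the relevant Taylor coefficients. Since the $-1/N_j$ are distinct, removing these factors produces polynomials $G,G_R$ of degrees $4M-3-(M-M_2)=3M+M_2-3$ and $4M-3-2(M-M_2)=2M+2M_2-3$. The comparison follows at once from $3M+M_2-3-(2M+2M_2-3)=M-M_2\ge 0$, with equality exactly when every distinct group size is repeated.

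To conclude that these degrees are the ML and REML degrees, it remains to check that for generic data $G$ (resp.\ $G_R$) has no further data-independent factor, has simple roots, and that $\theta\mapsto(\hat\mu(\theta),\hat\omega(\theta),\theta\hat\omega(\theta))$ is a bijection between its roots and the complex critical points of the likelihood lying in the domain $\det\Sigma\neq 0$. The only remaining candidates for universal roots are $\theta=-1/N_j$ with $m_j\ge 2$; evaluating $F$ there yields $N_j^4\,C^4\,m_j(N-m_j)\big(m_j\sum_{i:n_i=N_j}\bar Y_{i\cdot}^2-(\sum_{i:n_i=N_j}\bar Y_{i\cdot})^2\big)$, where $C$ is $\prod_{r\neq j}(1+N_r\theta)$ evaluated at $\theta=-1/N_j$, which is nonzero for generic data by strict Cauchy--Schwarz and by $N>q\ge m_j$, and an analogous nonvanishing holds for $F_R$. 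Simplicity of the roots follows because the discriminant is a nonzero polynomial in the data — it is already nonzero for the unbalanced data of Example~\ref{ex:dyestuff}. Finally, a critical point in the domain must have $\omega\neq 0$, since otherwise $\Sigma$ is singular, so it determines a root of $G$ (resp.\ $G_R$) via $\theta=\tau/\omega$, and the map is injective for the same reason. I expect the main obstacle to be precisely the careful clearing of denominators: correctly identifying which polynomial's degree equals the ML (resp.\ REML) degree — that is, separating the genuine stationary points of the profiled objective from the poles introduced by the weights $w_i$ and by the denominator $\sum_i w_i$ of $\hat\mu$ — and verifying that the spurious factors occur to the exact multiplicities $1$ and $2$ and no higher.
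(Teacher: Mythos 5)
Your route is the same as the paper's: eliminate $\mu$ and $\omega$ (the paper works with $\kappa=1/\omega$), reduce to a single univariate rational equation in $\theta$, observe that the numerator has degree $4M-3$ with the top coefficient carried by the within-group sum of squares, show that $d_1(\theta)=\prod_{\{j:m_j=1\}}(1+n_j\theta)$ divides the ML numerator exactly once and the REML numerator exactly twice, and subtract. Your $\tilde p_k,\tilde a_k,\tilde b_k$ are the paper's $f_1,f_Y,f_{Y^2}$ and $g_1,g_Y,g_{Y^2}$; your evaluation at $\theta=-1/N_j$ is equivalent to the paper's isolation of the ``diagonal'' terms in \eqref{eq:univar5} and \eqref{eq:univar6-2} that lack the factor $(1+n_t\theta)$; and your quantity $m_j\sum\bar Y_{i\cdot}^2-(\sum\bar Y_{i\cdot})^2$ is $m_j B_j$, exactly the expression in \eqref{eq:num3} that vanishes precisely when $m_j=1$. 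That part of the argument is sound and matches Lemmas~\ref{claim1} and~\ref{claim1reml}.

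The gap is in the ``no further cancellation'' step. The cleared denominator is not $P(\theta)^4$: for ML it is $Nd(\theta)^2f_1(\theta)^2$, which contains the square of $f_1=\tilde p_1$, the degree-$(M-1)$ polynomial whose roots are the poles of $\hat\mu(\theta)$. Those roots depend only on the design $(n_i,m_i)$, not on the observations, so they are genuine candidates for cancellation against the numerator; your assertion that ``the only remaining candidates for universal roots are $\theta=-1/N_j$ with $m_j\ge 2$'' simply omits them, and without excluding them your formula is only an upper bound. This exclusion is where the paper does its real work (Lemma~\ref{claim2}): modulo $f_1$ the numerator reduces to $f_Y(\theta)^2g_1(\theta)$; coprimality of $f_1$ and $f_Y$ is a routine genericity argument, but coprimality of $f_1$ and $g_1$ holds identically and requires the observation that $g_1$ is a sum of squares positive on $\mathbb{R}$ (so a common root must be non-real) combined with an explicit computation of the imaginary part of the relation $f_1(\theta_0)=0$ forcing the root to be real --- a contradiction. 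For REML the reduced denominator additionally contains the factor $Wf_1d_2+F$ (your $SP\tilde p_1+\tilde b_1\tilde p_1-\tilde a_1^2$ after removing $d_1$), whose coprimality with the numerator needs a separate resultant-type argument with a generic choice of $W$ (Lemma~\ref{claim2reml}). You correctly flag ``separating stationary points from the poles of $w_i$ and of $\hat\mu$'' as the main obstacle, but the proposal does not supply these arguments. A smaller issue: establishing simple roots by citing the nonvanishing of the discriminant for the data of Example~\ref{ex:dyestuff} only covers the configuration $(M,M_2)=(3,1)$, not general group-size patterns.
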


The condition $M_2=M$ holds if each group size appears at least twice.
In the balanced case, we have $M=M_2=1$ and the theorem recovers the
well-known fact that both degrees are one; compare
\cite{Hocking:1985,Searle:1992,Sahai:I}.  Each degree is maximal when
the group sizes $n_1,\dots,n_q$ are pairwise distinct.  The degrees
are then $3q-3$ for ML and $2q-3$ for REML.

\begin{example}
  The model for the dyestuff data from Example~\ref{ex:dyestuff} has
  $q=6$ groups.  The unbalanced case we considered had group sizes
  $(n_1,\dots,n_6)=(3,4,5,5,5,5)$.  The multiplicities are
  $\{1,1,4\}$.  Our formulas confirm the ML and REML degree to be
  $3\cdot 3 + 1 - 3 = 7$ and $2\cdot 3 +2\cdot 1 - 3 = 5$,
  respectively.  As another example, if $(n_1,\dots,n_6)
  =(4,4,3,2,2,2)$, then the ML degree is $8$ and the REML degree is
  $7$.
\end{example}

The remainder of the paper is structured as follows.  In
Section~\ref{sec:deriv-likeqn}, we review the derivation of the
likelihood equations for ML and REML estimation.
Section~\ref{sec:MLproof} contains the proof of the ML degree formula
from Theorem~\ref{thm:MAIN}, and Section~\ref{sec:REMLproof} treats
the REML degree.  Each proof consists of a detailed study of a
univariate rational equation in the variance ratio $\theta$.  In
Section~\ref{sec:general-mean}, we demonstrate that algebraic
computations are feasible for more general linear mixed models.  More
precisely, we treat a one-way layout with $q=109$ unbalanced groups
and a mean structure given by two covariates that is relevant in a
recent application.  In Section~\ref{sec:two-way}, we consider
balanced two-way layouts.  These are known to have REML degree equal
to one, and we show that the ML degree is four, which means that ML
estimates are available in closed form in the sense of Cardano's
formula.  Our conclusions are summarized in
Section~\ref{sec:conclusion}, where we also give two examples of
unbalanced one-way random effects models with bimodal likelihood
functions.

\section{The likelihood equations}
\label{sec:deriv-likeqn}

Let $n_1, \ldots, n_M$ be unique group sizes with associated
multiplicities $m_1, \ldots, m_M$. Let
$Y_{ij}=(Y_{ij1},\dots,Y_{ijn_i})$ be the vector comprising the
observations in the $j$th group of size $n_i$.  Then the model for the
one-way layout given by (\ref{eq:model-oneway}) can equivalently be
described as stating that $Y_{11},\dots, Y_{1m_1}, Y_{21}, \ldots,
Y_{Mm_M}$ are independent multivariate normal random vectors with
\[
Y_{ij} \sim \mathcal{N}\left( \mu\mathbf{1}_{n_i}, \Sigma_{n_i}(\omega,\tau)
\right),
\]
where the covariance matrix is
\[
\Sigma_{n_i}(\omega,\tau) =\omega I_{n_i} +
\tau\mathbf{1}_{n_i}\mathbf{1}_{n_i}^T.
\]
Here, $\mathbf{1}_n=(1,\dots,1)^T\in\mathbb{R}^n$, and $I_n$ is the
$n\times n$ identity matrix.

\subsection{Maximum likelihood}
\label{sec:MLequations}

Ignoring additive constants and multiplying by two, the log-likelihood
function of the one-way model is
\begin{align}
  \label{eq:ell0}
  \ell(\mu,\omega,\tau) 
  & = \sum_{i=1}^M \sum_{j=1}^{m_i} \log\det\left( K_{n_i}(\omega,\tau)\right) -
  (Y_{ij}-\mu\mathbf{1}_{n_i})^T K_{n_i}(\omega,\tau)
  (Y_{ij}-\mu\mathbf{1}_{n_i}),
\end{align}
where 
\begin{equation}
  \label{eq:Sigma-inverse}
  K_{n_i}(\omega,\tau) = 
  \frac{1}{\omega}I_{n_i} - 
  \frac{\tau}{\omega(\omega+n_i\tau)}
  \mathbf{1}_{n_i}\mathbf{1}_{n_i}^T.
\end{equation}
is the inverse of $\Sigma_{n_i}(\omega,\tau)$.  The inverse has
determinant
\begin{equation}
  \label{eq:Sigma-det}
  \det( K_{n_i}(\omega,\tau)) = \frac{1}{\omega^{n_i-1}(\omega+n_i\tau)}.
\end{equation}

Let $N=m_1n_1+\dots+m_Mn_M$ be the total number of
observations.
For each $i=1,\dots,M$, define the {\em group averages\/}
\[
\bar Y_{ij} = \frac{1}{n_i} \sum_{k=1}^{n_i} Y_{ijk}, \quad j=1, \ldots, m_i,
\]
and the average across the groups of equal size
\[
\bar Y_ i =\frac{1}{ m_i} \sum_{j=1}^{m_i} \bar Y_{ij}.
\]
From the averages, compute the {\em between-group sum of squares\/}
\[
B_i = \sum_{j=1}^{m_i} (\bar Y_{ij} - \bar Y_i)^2.
\]
Note that, for generic data, $B_j=0$ if and only if $m_j=1$.
Therefore, it suffices to consider the sums of squares $B_i$ with
$m_i\geq 2$.  Finally, define the {\em within-group sum of squares\/}
\[
W = \sum_{i=1}^M \sum_{j=1}^{m_i} \sum_{k=1}^{n_i} (Y_{ijk}- \bar Y_{ij})^2 ,
\]
which is positive for generic data.

\begin{proposition}
  \label{prop:ell}
  Upon the substitution $\kappa=1/\omega$ and $\theta=\tau/\omega$,
  the log-likelihood function for the one-way layout can be written as
  \begin{multline}
    \label{em:ml-log-lik}
    \ell(\mu, \kappa,\theta) = N\log(\kappa) -\kappa W - \left[
      \sum_{i=1}^M m_i \log(1+n_i\theta) \right]-\kappa
    \left[\sum_{i=1}^M
      \frac{n_i}{1+n_i\theta} B_i \right]\\
     -\kappa
    \left[\sum_{i=1}^M
      \frac{m_in_i}{1+n_i\theta} (\bar Y_i -
      \mu)^2 \right].
  \end{multline}
\end{proposition}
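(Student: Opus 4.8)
The plan is to derive (\ref{em:ml-log-lik}) by directly rewriting the log-likelihood (\ref{eq:ell0}), handling the log-determinant terms and the quadratic-form terms separately and substituting $\omega=1/\kappa$, $\tau=\theta/\kappa$ only at the very end. Throughout, write $q=\sum_{i=1}^M m_i$ for the number of groups, so that $\sum_{i=1}^M m_i(n_i-1)=N-q$.

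For the log-determinant part, (\ref{eq:Sigma-det}) gives
\[
\sum_{i=1}^M\sum_{j=1}^{m_i}\log\det K_{n_i}(\omega,\tau)
= -\sum_{i=1}^M m_i\bigl[(n_i-1)\log\omega+\log(\omega+n_i\tau)\bigr]
= -(N-q)\log\omega-\sum_{i=1}^M m_i\log(\omega+n_i\tau).
\]
Since $\omega=1/\kappa$ and $\omega+n_i\tau=(1+n_i\theta)/\kappa$, the right-hand side becomes $(N-q)\log\kappa+q\log\kappa-\sum_{i=1}^M m_i\log(1+n_i\theta)=N\log\kappa-\sum_{i=1}^M m_i\log(1+n_i\theta)$, which supplies the $N\log(\kappa)$ term and the $-\sum_{i=1}^M m_i\log(1+n_i\theta)$ term of (\ref{em:ml-log-lik}).

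Next I would expand a single quadratic form. Using (\ref{eq:Sigma-inverse}) together with $\mathbf{1}_{n_i}^T(Y_{ij}-\mu\mathbf{1}_{n_i})=n_i(\bar Y_{ij}-\mu)$ and the within-group decomposition $(Y_{ij}-\mu\mathbf{1}_{n_i})^T(Y_{ij}-\mu\mathbf{1}_{n_i})=\sum_{k=1}^{n_i}(Y_{ijk}-\bar Y_{ij})^2+n_i(\bar Y_{ij}-\mu)^2$, one obtains
\[
(Y_{ij}-\mu\mathbf{1}_{n_i})^T K_{n_i}(\omega,\tau)(Y_{ij}-\mu\mathbf{1}_{n_i})
=\frac{1}{\omega}\sum_{k=1}^{n_i}(Y_{ijk}-\bar Y_{ij})^2
+\Bigl(\frac{n_i}{\omega}-\frac{n_i^2\tau}{\omega(\omega+n_i\tau)}\Bigr)(\bar Y_{ij}-\mu)^2.
\]
The one step that needs care is the Sherman--Morrison-type simplification of the coefficient in parentheses, which collapses to $n_i/(\omega+n_i\tau)$; this is the only place where the two terms of $K_{n_i}$ interact. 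Summing over $j$ and using the second-level decomposition $\sum_{j=1}^{m_i}(\bar Y_{ij}-\mu)^2=B_i+m_i(\bar Y_i-\mu)^2$, then summing over $i$, the total quadratic contribution is
\[
\frac{1}{\omega}W+\sum_{i=1}^M\frac{n_i}{\omega+n_i\tau}B_i+\sum_{i=1}^M\frac{m_in_i}{\omega+n_i\tau}(\bar Y_i-\mu)^2.
\]

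Finally, substituting $1/\omega=\kappa$ and $n_i/(\omega+n_i\tau)=\kappa\,n_i/(1+n_i\theta)$ turns this into $\kappa W+\kappa\sum_{i=1}^M\frac{n_i}{1+n_i\theta}B_i+\kappa\sum_{i=1}^M\frac{m_in_i}{1+n_i\theta}(\bar Y_i-\mu)^2$; subtracting it from the log-determinant part computed above yields exactly (\ref{em:ml-log-lik}). I do not expect a genuine obstacle here — the argument is a single computation — so the only things warranting attention are the coefficient cancellation above and the bookkeeping of the two analysis-of-variance sum-of-squares decompositions: group observations about their group averages $\bar Y_{ij}$, and then the group averages about the averages $\bar Y_i$ taken over groups of equal size.
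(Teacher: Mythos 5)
Your proposal is correct and follows essentially the same route as the paper's proof: the same Sherman--Morrison-type collapse of the quadratic-form coefficient to $n_i/(\omega+n_i\tau)$, followed by the same two-level sum-of-squares decomposition into $W$, $B_i$, and $m_i(\bar Y_i-\mu)^2$. The only cosmetic difference is that you make the log-determinant bookkeeping via $\sum_i m_i(n_i-1)=N-q$ explicit and defer the substitution $\kappa=1/\omega$, $\theta=\tau/\omega$ to the end, whereas the paper substitutes inline.
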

\begin{proof}
  Applying (\ref{eq:Sigma-inverse}), the quadratic form in
  (\ref{eq:ell0}) can be expanded into
  \begin{align*}
     &(Y_{ij}-\mu\mathbf{1}_{n_i})^T K_{n_i}(\omega,\tau)
    (Y_{ij}-\mu\mathbf{1}_{n_i})\\
    &=
    \frac{1}{\omega} \sum_{k=1}^{n_i} ( Y_{ijk}-\mu)^2  -
    \frac{\tau}{\omega(\omega+n_i\tau)}
    \left[(Y_{ij}-\mu\mathbf{1}_{n_i})^T\mathbf{1}_{n_i}\right]^2\\
    &=
    \frac{1}{\omega} \sum_{k=1}^{n_i} ( Y_{ijk}-\bar Y_{ij})^2 +
    \frac{n_i}{\omega}  ( \bar Y_{ij}-\mu)^2 -
    \frac{\tau}{\omega(\omega+n_i\tau)}
    n_i^2( \bar Y_{ij}-\mu)^2\\
    &= \kappa
    \frac{n_i}{1+n_i\theta}
    ( \bar Y_{ij}-\mu)^2
    +\kappa \sum_{k=1}^{n_i} ( Y_{ijk}-\bar Y_{ij})^2 .
  \end{align*}
  Using this expression and (\ref{eq:Sigma-det}), the log-likelihood
  function is seen to be equal to
  \begin{align}
    \label{eq:ell-sub}
    &\ell(\mu,\kappa,\theta)  \\
    \nonumber
    &= N\log(\kappa) -\kappa W-
    \left[\sum_{i=1}^M m_i \log(1+n_i\theta) \right] - 
    \kappa\left[\sum_{i=1}^M \sum_{j=1}^{m_i} \frac{n_i}{1+n_i\theta} (
      \bar Y_{ij}-\mu)^2
    \right] .
  \end{align}
  The claimed form of $\ell(\mu,\kappa,\theta)$ is now obtained by
  expanding the last sum as
  \begin{align}
    & \sum_{j=1}^{m_i} \frac{n_i}{1+n_i\theta}  (
    \bar Y_{ij}-\mu)^2\\
    &= \frac{n_i}{1+n_i \theta} \sum_{j=1}^{m_i}
    \left[(\bar Y_{ij} - \bar Y_i)^2+(\bar Y_i - \mu)^2+2(\bar
    Y_{ij}-\bar Y_i)(\bar Y_i - \mu) \right]\\ 
    &= \frac{m_in_i}{1+n_i \theta} (\bar Y_i -
    \mu)^2  +  \frac{n_i}{1+n_i \theta} B_i.
    \qedhere
  \end{align}
\end{proof}

The partial derivatives of the log-likelihood function from
Proposition~\ref{prop:ell} are
\begin{align}
  \frac{\partial\ell }{\partial\mu} &=
  2\kappa\sum_{i=1}^M\frac{m_i n_i}{1+n_i\theta} (
    \bar Y_i-\mu),\\
  \frac{\partial\ell}{\partial\kappa} &=
  \frac{N}{\kappa} - \left[ W + \sum_{i=1}^M\frac{m_i n_i}{1+n_i\theta} (
    \bar Y_i-\mu)^2  +\sum_{i=1}^M \frac{n_i}{(1+n_i\theta)}B_i\right],\\
  \frac{\partial\ell}{\partial\theta} &=
  - \left[\sum_{i=1}^M \frac{m_i n_i}{1+n_i\theta} \right] + 
  \kappa\left[\sum_{i=1}^M\frac{m_i n_i^2}{(1+n_i\theta)^2} (
    \bar Y_i-\mu)^2+\sum_{i=1}^M\frac{ n_i^2}{(1+n_i \theta)^2}B_i
  \right].
\end{align}
Since $N\not=0$, the equation system obtained by setting the three partials
to zero has the same solution set as the equation system
\begin{align}
  \label{eq:mu}
  \sum_{i=1}^M\frac{m_i n_i}{1+n_i\theta} (
    \bar Y_i-\mu)&=0,\\
  \label{eq:kappa}
  N - \kappa \left[ W + \sum_{i=1}^M\frac{m_i n_i}{1+n_i\theta} (
    \bar Y_i-\mu)^2  + \sum_{i=1}^M\frac{n_i}{1+n_i\theta} B_i \right] &= 0,\\
  \label{eq:theta}
  \kappa\left[\sum_{i=1}^M\frac{m_i n_i^2}{(1+n_i\theta)^2} (
    \bar Y_i-\mu)^2 +\sum_{i=1}^M\frac{n_i^2}{(1+n_i\theta)^2} B_i
  \right] - \left[\sum_{i=1}^M \frac{m_i n_i}{1+n_i\theta} \right] &=0 .
\end{align}
Now we can solve equation~(\ref{eq:mu}) for $\mu$, substitute the
result into equation~(\ref{eq:kappa}) and solve for $\kappa$.  Both
$\mu$ and $\kappa$ are then expressed in terms of $\theta$.
Substituting the expressions into (\ref{eq:theta}), we obtain a
univariate rational equation in $\theta$.  Our proof of the ML degree
formula in Theorem~\ref{thm:MAIN} proceeds by cancelling terms from
the numerator and denominator of this rational expression.  This is
the topic of Section~\ref{sec:MLproof}.

\subsection{Restricted maximum likelihood}
\label{sec:REMLequations}

The REML method uses a slightly different likelihood function that is
obtained by considering a projection of the observed random array
$(Y_{ijk})\in\mathbb{R}^N$.  The mean of this array has all entries
equal to $\mu$.  In other words, it is modelled to lie in the space
$\mathcal{L}\subset \mathbb{R}^N$ spanned by the array with all
entries equal to one.  The likelihood function used in REML is
obtained by taking the observation to be the projection of $(Y_{ijk})$
onto the orthogonal complement of $\mathcal{L}$.  The distribution of
the projection no longer depends on $\mu$ and so the REML function
only has $(\tau,\omega)$ or, equivalently, $(\kappa,\theta)$ as
arguments.

Using the formulas given, for instance, in \cite{McCullagh:1989}, and
simplifying the resulting expressions similar to what was done in the
proof of Proposition~\ref{prop:ell}, we obtain the following
expression for the restricted log-likelihood function.

\begin{proposition}
  \label{prop:ell-reml}
  Upon the substitution $\kappa=1/\omega$ and $\theta=\tau/\omega$,
  the restricted log-likelihood function for the one-way layout can be
  written as
  \begin{multline}
    \label{eq:reml-log-lik}
    \bar\ell(\kappa,\theta) = (N-1)\log(\kappa) -\kappa W - \left[
      \sum_{i=1}^M m_i \log(1+n_i\theta) \right]\\
    -\log\left(\sum_{i=1}^M \frac{m_i n_i}{1+n_i\theta} \right) -\kappa
    \left[\sum_{i=1}^M
      \frac{n_i}{1+n_i\theta} B_i \right]-
    \kappa
    \left[\sum_{i=1}^M
      \frac{m_in_i}{1+n_i\theta} (\bar Y_i -
      \hat\mu(\theta))^2 \right],
  \end{multline}
  with
  \begin{equation}
    \label{eq:mu-sol}
    \hat\mu(\theta) = \frac{\sum_{i=1}^M \sum_{j=1}^{m_i}
      \frac{n_i}{1+n_i\theta}  \bar Y_{ij}}{
      \sum_{i=1}^M \sum_{j=1}^{m_i}\frac{n_i}{1+n_i\theta} } = 
    \frac{\sum_{i=1}^M \frac{ m_in_i}{1+n_i\theta} 
      \bar Y_{i}}{
      \sum_{i=1}^M \frac{ m_in_i}{1+n_i\theta} }.
  \end{equation}
\end{proposition}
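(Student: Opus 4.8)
The plan is to specialize the standard expression for the restricted log-likelihood of a Gaussian linear model and then reuse the algebra carried out in the proof of Proposition~\ref{prop:ell}. Write $Y\in\mathbb{R}^N$ for the full data vector, stacked group by group, $X=\mathbf{1}_N$ for the design matrix of the one-dimensional mean structure, and $V=V(\omega,\tau)$ for the block-diagonal covariance matrix whose blocks are the matrices $\Sigma_{n_i}(\omega,\tau)$, each repeated $m_i$ times. Adopting the convention used for the ordinary log-likelihood in \eqref{eq:ell0} (discard additive constants, multiply by two), the formulas in \cite{McCullagh:1989} give the restricted log-likelihood as
\begin{equation*}
  \bar\ell = -\log\det V - \log\det\bigl(X^T V^{-1} X\bigr) - (Y - X\hat\mu)^T V^{-1} (Y - X\hat\mu),
\end{equation*}
where $\hat\mu=(X^T V^{-1} X)^{-1}X^T V^{-1} Y$ is the generalized least squares estimate and the constant term $\log\det(X^T X)$ appearing in some references has been dropped. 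It then remains only to evaluate the three summands.

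First I would handle the two determinant terms. From \eqref{eq:Sigma-det}, together with $\omega=1/\kappa$ and $\omega+n_i\tau=(1+n_i\theta)/\kappa$, one gets $\log\det K_{n_i}=n_i\log\kappa-\log(1+n_i\theta)$, so that $-\log\det V=\sum_i m_i\log\det K_{n_i}=N\log\kappa-\sum_i m_i\log(1+n_i\theta)$. Since $\Sigma_{n_i}\mathbf{1}_{n_i}=(\omega+n_i\tau)\mathbf{1}_{n_i}$, we have $K_{n_i}\mathbf{1}_{n_i}=\tfrac{\kappa}{1+n_i\theta}\mathbf{1}_{n_i}$, hence $X^T V^{-1} X=\kappa\sum_i\tfrac{m_i n_i}{1+n_i\theta}$ and $-\log\det(X^T V^{-1}X)=-\log\kappa-\log\bigl(\sum_i\tfrac{m_i n_i}{1+n_i\theta}\bigr)$. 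Adding the first two summands yields the leading $(N-1)\log\kappa$ and the two logarithmic terms of \eqref{eq:reml-log-lik}. The same identity gives $X^T V^{-1}Y=\kappa\sum_i\sum_j\tfrac{n_i}{1+n_i\theta}\bar Y_{ij}=\kappa\sum_i\tfrac{m_i n_i}{1+n_i\theta}\bar Y_i$, and dividing by $X^T V^{-1}X$ reproduces exactly the formula \eqref{eq:mu-sol} for $\hat\mu(\theta)$.

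For the quadratic form I would proceed verbatim as in the proof of Proposition~\ref{prop:ell}: expand $(Y_{ij}-\mu\mathbf{1}_{n_i})^T K_{n_i}(Y_{ij}-\mu\mathbf{1}_{n_i})=\kappa\tfrac{n_i}{1+n_i\theta}(\bar Y_{ij}-\mu)^2+\kappa\sum_k(Y_{ijk}-\bar Y_{ij})^2$, sum over $j$ using $\sum_j(\bar Y_{ij}-\mu)^2=B_i+m_i(\bar Y_i-\mu)^2$, and then sum over $i$ to obtain $\kappa W+\kappa\sum_i\tfrac{n_i}{1+n_i\theta}B_i+\kappa\sum_i\tfrac{m_i n_i}{1+n_i\theta}(\bar Y_i-\mu)^2$; setting $\mu=\hat\mu(\theta)$ produces the last two bracketed terms of \eqref{eq:reml-log-lik}, completing the proof. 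No serious obstacle arises beyond bookkeeping; the one point requiring care is the accounting of constant versus non-constant terms in the REML formula — keeping the $\theta$-dependence of $\hat\mu(\theta)$ inside the final quadratic term while discarding the genuinely constant contributions $\log\det(X^T X)$ and $\log(2\pi)$ — and checking that the factor-of-two normalization matches the one fixed in \eqref{eq:ell0}.
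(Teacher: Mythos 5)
Your proof is correct and follows exactly the route the paper indicates: the paper gives no detailed argument for Proposition~\ref{prop:ell-reml}, stating only that one applies the REML formulas from \cite{McCullagh:1989} and simplifies as in the proof of Proposition~\ref{prop:ell}, which is precisely what you carry out. Your evaluation of the determinant terms via \eqref{eq:Sigma-det}, the eigenvector identity $K_{n_i}\mathbf{1}_{n_i}=\tfrac{\kappa}{1+n_i\theta}\mathbf{1}_{n_i}$ yielding the $-\log\kappa$ correction and the formula \eqref{eq:mu-sol}, and the reuse of the quadratic-form expansion all check out.
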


Note that $\hat\mu(\theta)$ is the solution to the equation in
(\ref{eq:mu}).  Computing $\hat\mu(\theta)$ is the standard way to
obtain an estimate of $\mu$ from a REML estimate of $\theta$.

The partial derivatives of the restricted log-likelihood function from
Proposition~\ref{prop:ell-reml} are 
\begin{align}
  \label{eq:reml-partial-kappa}
  \frac{\partial\bar\ell}{\partial\kappa} &=
  \frac{N-1}{\kappa} - \left[ W + \sum_{i=1}^M\frac{m_in_i}{1+n_i\theta} (
    \bar Y_i-\hat\mu(\theta))^2  +\sum_{1=1}^M \frac{n_i}{1+n_i \theta} B_i\right],\\
  \label{eq:reml-partial-theta}
  \frac{\partial\bar\ell}{\partial\theta} &= 
  - \left[\sum_{i=1}^M \frac{m_i n_i}{1+n_i\theta} \right] 
  + \frac{  \sum_{i=1}^M\frac{m_i n_i^2}{(1+n_i\theta)^2} }{
    \sum_{i=1}^M\frac{m_i n_i}{(1+n_i\theta)} }\\
\notag &+ 
  \kappa\left[\sum_{i=1}^M\frac{m_in_i^2}{(1+n_i\theta)^2} (
    \bar Y_i-\hat\mu(\theta))^2 + \sum_{i=1}^M \frac{n_i^2}{(1+n_i \theta)^2} B_i
  \right].
\end{align}
The equation $\partial\bar\ell/\partial\kappa =0$ is easily solved.
Substituting the unique solution $\hat\kappa(\theta)$ into the
equation $\partial\bar\ell/\partial\theta =0$ yields again a
univariate rational equation in $\theta$.  The proof of the REML
degree formula in Theorem~\ref{thm:MAIN} requires studying
cancellations from the numerator and denominator of this equation,
which is the topic of Section~\ref{sec:REMLproof}.

\bigskip
\section{Proof of formula for ML degree}
\label{sec:MLproof}

Our proof of the ML degree formula in Theorem~\ref{thm:MAIN} proceeds
in two steps. First, in Lemma~\ref{lemma:univRatlEqn} we derive a
univariate rational equation whose number of zeros is the ML degree of
the model. Second, we simplify it in Lemmas~\ref{claim1}
and~\ref{claim2} by clearing common factors from the numerator and the
denominator.

Fix the  following notation, used throughout. 
For a vector $a=(a_1,\dots,a_M)\in\mathbb{R}^M$, define the rational
functions
\[
r_a(\theta)= \sum_{i=1}^M \frac{m_i n_i}{1+n_i\theta} a_i \phantom{xx}\mbox{ and }\phantom{xx} s_a(\theta)= \sum_{i=1}^M \frac{m_i n_i^2}{(1+n_i\theta)^2} a_i.
\]
We write $r_1$, $r_{B/m}$, $r_Y$, $r_{Y^2}$ for the functions $r_a$
that have
\[
a=\mathbf{1}_M, \quad a=\left(\frac{B_1}{m_1}, \dots, \frac{B_M}{m_M}\right),
\quad a=(\bar Y_1,\dots,\bar Y_M), \quad a=(\bar
Y_1^2,\dots,\bar Y_M^2),
\]
respectively.  It is clear from Section~\ref{sec:deriv-likeqn} that
forming a common denominator for the rational equations to be studied
involves the product
\[
 d(\theta) = \prod_{i=1}^M (1+n_i\theta)=d_1(\theta)d_2(\theta),
\]
where
\[
d_1(\theta)= \prod_{\{i: m_i=1\}}(1+n_i \theta), \quad
d_2(\theta)=\prod_{\{i: m_i \geq 2\}} (1+n_i \theta).
\]
For a vector $a\in\mathbb{R}^M$, define the degree $M-1$ polynomial
\[
f_a(\theta)= d(\theta)r_a(\theta) = \sum_{i=1}^M m_i n_i a_i \prod_{j\not=i}
(1+n_j\theta) 
\]
and the degree $2(M-1)$
polynomial
\[
g_a(\theta)= d(\theta)^2s_a(\theta) = \sum_{i=1}^M m_i n_i^2 a_i
\prod_{j\not=i} (1+n_j\theta)^2.
\]

\begin{lemma}\label{lemma:univRatlEqn}
  The ML degree of the one-way layout is the degree of the numerator
  created when cancelling all common factors from numerator and
  denominator of the following rational function in $\theta$:
  \begin{multline}
    \label{eq:univar4}
    \frac{1}{Nd(\theta)^2f_1(\theta)^2} \\
    \times\left( N\left[
        f_1(\theta)^2g_{Y^2}(\theta)-2f_Y(\theta)f_1(\theta)g_Y(\theta)+
        f_Y(\theta)^2g_1(\theta)+f_1(\theta)^2g_{B/m}(\theta) \right] \right.\\
    \left.  - f_1(\theta)^2 \left[W f_1(\theta)d(\theta) +
        f_{Y^2}(\theta)f_1(\theta) - f_Y(\theta)^2
        +f_1(\theta)f_{B/m}(\theta) \right] \right).
  \end{multline}
\end{lemma}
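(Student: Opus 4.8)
The plan is to carry out the elimination of $\mu$ and $\kappa$ announced at the end of Section~\ref{sec:MLequations} in an organized way, keeping track of denominators via $d(\theta)$, $f_1(\theta)$, and the polynomials $f_a$, $g_a$. First I would rewrite the three score equations \eqref{eq:mu}, \eqref{eq:kappa}, \eqref{eq:theta} in the $r_a$/$s_a$ notation: \eqref{eq:mu} becomes $r_Y(\theta)-\mu\,r_1(\theta)=0$, so the unique solution is $\hat\mu(\theta)=r_Y(\theta)/r_1(\theta)=f_Y(\theta)/f_1(\theta)$ (this is exactly \eqref{eq:mu-sol}). Next I would substitute $\hat\mu(\theta)$ into \eqref{eq:kappa}. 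The bracketed quadratic in $\mu$, namely $\sum_i \frac{m_in_i}{1+n_i\theta}(\bar Y_i-\mu)^2$, expands to $r_{Y^2}(\theta)-2\mu\,r_Y(\theta)+\mu^2 r_1(\theta)$; plugging in $\hat\mu=r_Y/r_1$ collapses this to $r_{Y^2}(\theta)-r_Y(\theta)^2/r_1(\theta)$. Hence \eqref{eq:kappa} gives
\[
\hat\kappa(\theta)=\frac{N}{W+r_{Y^2}(\theta)-\dfrac{r_Y(\theta)^2}{r_1(\theta)}+r_{B/m}(\theta)\cdot\dfrac{?}{?}}\,,
\]
where I must be a little careful: the term $\sum_i\frac{n_i}{1+n_i\theta}B_i$ in \eqref{eq:kappa} equals $\sum_i \frac{m_in_i}{1+n_i\theta}\cdot\frac{B_i}{m_i}=r_{B/m}(\theta)$, so in fact $\hat\kappa(\theta)^{-1}=W+r_{Y^2}(\theta)-r_Y(\theta)^2/r_1(\theta)+r_{B/m}(\theta)$. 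Clearing the denominator $r_1(\theta)$ and then $d(\theta)$ turns this into $\hat\kappa(\theta)=N f_1(\theta)d(\theta)\big/\big(W f_1(\theta)d(\theta)+f_{Y^2}(\theta)f_1(\theta)-f_Y(\theta)^2+f_1(\theta)f_{B/m}(\theta)\big)$, which is precisely the expression appearing in the second bracket of \eqref{eq:univar4}.

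The third step is to substitute both $\hat\mu(\theta)$ and $\hat\kappa(\theta)$ into \eqref{eq:theta}. The first bracket of \eqref{eq:theta}, $\sum_i\frac{m_in_i^2}{(1+n_i\theta)^2}(\bar Y_i-\mu)^2+\sum_i\frac{n_i^2}{(1+n_i\theta)^2}B_i$, expands in the $s_a$ notation to $s_{Y^2}(\theta)-2\mu\,s_Y(\theta)+\mu^2 s_1(\theta)+s_{B/m}(\theta)$; inserting $\hat\mu=r_Y/r_1$ and multiplying through by $r_1(\theta)^2$ produces $r_1^2 s_{Y^2}-2r_Yr_1 s_Y+r_Y^2 s_1+r_1^2 s_{B/m}$, and then multiplying by $d(\theta)^2$ converts every $r$ into an $f$ and every $s$ into a $g$, giving the combination $f_1^2 g_{Y^2}-2f_Yf_1 g_Y+f_Y^2 g_1+f_1^2 g_{B/m}$ that forms the first bracket of \eqref{eq:univar4}. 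The second bracket of \eqref{eq:theta} is just $r_1(\theta)=f_1(\theta)/d(\theta)$. Equation \eqref{eq:theta} thus reads $\hat\kappa(\theta)\cdot\big[\text{(first bracket)}\big]-r_1(\theta)=0$; substituting the expression for $\hat\kappa$, putting everything over the common denominator $Nd(\theta)^2 f_1(\theta)^2$, and collecting terms yields exactly the rational function displayed in \eqref{eq:univar4}. Finally I would invoke the general principle (as in \cite[Chap.~2]{Drton:book}, already cited) that the ML degree equals the number of solutions of the likelihood equations for generic data, which for a univariate rational equation $P(\theta)/Q(\theta)=0$ with $\gcd(P,Q)=1$ is $\deg P$; this gives the claimed characterization. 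One small point to address is that dividing by $r_1(\theta)$, $d(\theta)$, $f_1(\theta)$ and by $N$ does not change the zero set among generic data: $N\neq 0$ always, $d(\theta)$ has no zero at any $\theta\geq 0$ relevant to the statistics problem and its roots $\theta=-1/n_i$ are not zeros of the numerator for generic data, and similarly $f_1(\theta)$ is not identically zero and shares no generic root with the numerator — so these operations are legitimate and the degree of the reduced numerator is well defined.

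The main obstacle I anticipate is purely bookkeeping: one must verify that each clearing of a denominator is matched by the corresponding power of that denominator appearing in \eqref{eq:univar4} (a factor $d(\theta)^2$ and $f_1(\theta)^2$ overall, plus the lone $N$), and that no spurious common factor is introduced or lost when passing between the $r_a$/$s_a$ form and the $f_a$/$g_a$ form. There is no conceptual difficulty — every identity used is the expansion of a quadratic in $\mu$ combined with the definitions of $f_a$ and $g_a$ from the start of this section — but the algebra is dense enough that I would present the substitution of $\hat\mu$, then the substitution of $\hat\kappa$, then the clearing of denominators, as three clearly separated computations, checking degrees at each stage ($\deg f_a=M-1$, $\deg g_a=2(M-1)$, $\deg d=M$) so that the reader can confirm the final numerator and denominator are polynomials of the expected size before any cancellation. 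The actual determination of how much cancellation occurs — and hence the numerical value $3M+M_2-3$ of the ML degree — is deferred to Lemmas~\ref{claim1} and~\ref{claim2}, so here I only need the rational function itself, not its reduced form.
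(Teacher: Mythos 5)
Your plan follows the paper's proof essentially step for step: solve \eqref{eq:mu} for $\hat\mu=r_Y/r_1$, collapse the quadratic in $\mu$ to $r_{Y^2}-r_Y^2/r_1$, solve \eqref{eq:kappa} for $\hat\kappa$, substitute both into \eqref{eq:theta}, and convert the $r_a,s_a$ expressions to $f_a,g_a$ by clearing powers of $d(\theta)$. The only cosmetic difference is that the paper first divides \eqref{eq:theta} by the nonzero expression $\hat\kappa(\theta)$, which is precisely what makes $Nd(\theta)^2f_1(\theta)^2$ (rather than a denominator involving $Wf_1(\theta)d(\theta)+\dots$) the common denominator; the resulting numerator is the same either way, so your argument is correct up to that bookkeeping adjustment.
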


\begin{proof}
  Adopting the notation above, the solution of the first of the
  likelihood equations in \eqref{eq:mu} can be written as
  \begin{equation}
    \label{eq:mu-sol2}
    \hat\mu(\theta) = 
    \frac{r_Y(\theta)}{r_1(\theta)}.
  \end{equation}
  Next, rewrite the following term from the system of the three
  critical equations:
  \begin{align}
  \label{eq:quad}
  \sum_{i=1}^M\frac{m_i n_i}{1+n_i\theta} (
  \bar Y_i-\hat\mu(\theta))^2 &=
  r_{Y^2}(\theta) - 2\frac{r_Y(\theta)}{r_1(\theta)}
  \sum_{i=1}^M\frac{m_i n_i}{1+n_i\theta} \bar Y_i +
  \frac{r_Y(\theta)^2}{r_1(\theta)^2}\sum_{i=1}^M\frac{m_i n_i}{1+n_i\theta} \\
  \nonumber
  &=
  r_{Y^2}(\theta) - \frac{r_Y(\theta)^2}{r_1(\theta)} .
\end{align}
Solving the second equation in~\eqref{eq:kappa} with
$\mu=\hat\mu(\theta)$ for $\kappa$ thus gives
\begin{align}
  \label{eq:hat-kappa}
  \hat\kappa(\theta) &=
  \frac{N}{W + r_{Y^2}(\theta)+r_{B/m}(\theta) -
    \frac{r_Y(\theta)^2}{r_1(\theta)}}\\ 
  &=
  \frac{N r_1(\theta)}{W r_1(\theta) + r_{Y^2}(\theta)r_1(\theta) +
  r_1(\theta)r_{B/m}(\theta)- r_Y(\theta)^2}. 
\end{align}

Substituting $\hat\mu(\theta)$ and $\hat\kappa(\theta)$ into the third and last
equation in~\eqref{eq:theta}, we obtain the univariate rational equation
\begin{align}
  \label{eq:univar}
    s_{Y^2}(\theta)-2\frac{r_Y(\theta)}{r_1(\theta)}s_Y(\theta)+
    \frac{r_Y(\theta)^2}{r_1(\theta)^2}s_1(\theta)
    +s_{B/m}(\theta)
   - \frac{r_1(\theta)}{\hat\kappa(\theta) } = 0,
\end{align}
where we have divided by the non-zero rational expression
$\hat\kappa(\theta)$.  According to~(\ref{eq:hat-kappa}), this is
\begin{multline}
  \label{eq:univar2}
    s_{Y^2}(\theta)-2\frac{r_Y(\theta)}{r_1(\theta)}s_Y(\theta)+
    \frac{r_Y(\theta)^2}{r_1(\theta)^2}s_1(\theta)+s_{B/m}\\
   -
\frac{W r_1(\theta) + r_{Y^2}(\theta)r_1(\theta) +r_1(\theta)r_{B/m} -
  r_Y(\theta)^2}{N} = 0.
\end{multline}
Reexpress~(\ref{eq:univar}) in terms of the $f$ and $g$ polynomials as
\begin{multline}
  \label{eq:univar2-2}
  \frac{g_{Y^2}(\theta)}{d(\theta)^2}
  -2\frac{f_Y(\theta)}{f_1(\theta)}\frac{g_Y(\theta)}{d(\theta)^2}+
    \frac{f_Y(\theta)^2}{f_1(\theta)^2}\frac{g_1(\theta)}{d(\theta)^2}
    +\frac{g_{B/m}}{d(\theta)^2}\\
    - \frac{W f_1(\theta)d(\theta) + f_{Y^2}(\theta)f_1(\theta)+ f_1(\theta)f_{B/m}-
  f_Y(\theta)^2}{Nd(\theta)^2} = 0.
\end{multline}
The claim now follows by forming a common denominator.
\end{proof}

The denominator given in~(\ref{eq:univar4}) in
Lemma~\ref{lemma:univRatlEqn} has degree $2M+2(M-1)=4M-2$. The
numerator in (\ref{eq:univar4}) has degree
$3(M-1)+M=4M-3$; the highest degree term involves the within-group sum
of squares $W$.
The next two lemmas imply that, after cancelling common factors, the
numerator of the univariate rational function from
Lemma~\ref{lemma:univRatlEqn} has the degree claimed in the ML degree
formula from Theorem~\ref{thm:MAIN}.

\begin{lemma}\label{claim1}
  If $m_t=1$, then $(1+n_{t}\theta)$ divides the numerator of the
  rational equation~\eqref{eq:univar4}.  Hence, the polynomial
  $d_1(\theta)$ of degree $M-M_2$ divides this numerator.
\end{lemma}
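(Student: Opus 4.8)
The plan is to show directly that $\theta_t := -1/n_t$ is a root of the numerator polynomial $h(\theta)$ appearing in \eqref{eq:univar4}, namely
\[
h(\theta) = N\bigl[f_1^2 g_{Y^2} - 2 f_Y f_1 g_Y + f_Y^2 g_1 + f_1^2 g_{B/m}\bigr] - f_1^2\bigl[W f_1 d + f_{Y^2} f_1 - f_Y^2 + f_1 f_{B/m}\bigr],
\]
and then to pass from the single linear factor $(1+n_t\theta)$ to the product $d_1(\theta)$ by a coprimality argument. First I would record how the building blocks behave at $\theta_t$. Since $1+n_j\theta_t = 0$ precisely when $j=t$ (the distinct group sizes $n_1,\dots,n_M$ being pairwise different), every summand of $f_a(\theta) = \sum_i m_i n_i a_i \prod_{j\ne i}(1+n_j\theta)$ with $i \ne t$ contains the vanishing factor $(1+n_t\theta_t)$; hence $f_a(\theta_t) = m_t n_t a_t\, c$ with $c := \prod_{j\ne t}(1+n_j\theta_t)$, and likewise $g_a(\theta_t) = m_t n_t^2 a_t\, c^2$, while $d(\theta_t) = 0$.

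Next I would use the hypothesis $m_t = 1$. It makes $\bar Y_t = \bar Y_{t1}$, so $B_t = 0$ identically; thus the $(B/m)$-vector has vanishing $t$-th coordinate and $f_{B/m}(\theta_t) = g_{B/m}(\theta_t) = 0$. Setting $m_t = 1$ and $a_t \in \{1, \bar Y_t, \bar Y_t^2\}$ in the formulas above gives $f_1(\theta_t) = n_t c$, $f_Y(\theta_t) = n_t\bar Y_t c$, $f_{Y^2}(\theta_t) = n_t\bar Y_t^2 c$, $g_1(\theta_t) = n_t^2 c^2$, $g_Y(\theta_t) = n_t^2\bar Y_t c^2$, $g_{Y^2}(\theta_t) = n_t^2\bar Y_t^2 c^2$. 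The two ``perfect-square'' combinations then collapse:
\[
\bigl(f_1^2 g_{Y^2} - 2 f_Y f_1 g_Y + f_Y^2 g_1\bigr)(\theta_t) = n_t^4\bar Y_t^2 c^4\,(1 - 2 + 1) = 0, \qquad \bigl(f_{Y^2} f_1 - f_Y^2\bigr)(\theta_t) = n_t^2\bar Y_t^2 c^2\,(1 - 1) = 0,
\]
while every remaining term of $h(\theta_t)$ vanishes because it carries a factor $d(\theta_t)$, $f_{B/m}(\theta_t)$ or $g_{B/m}(\theta_t)$. Hence $h(\theta_t) = 0$, so $(1 + n_t\theta)$ divides $h(\theta)$.

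Finally, applying this to each index $t$ with $m_t = 1$ and using that the linear polynomials $1 + n_t\theta$ are pairwise coprime (the $n_t$ being distinct), I conclude that their product $d_1(\theta)$ divides $h(\theta)$; its degree is $\#\{t : m_t = 1\} = M - M_2$. The only point needing care is the bookkeeping: one must notice that substituting $\hat\mu(\theta) = r_Y/r_1$ is exactly what produces the quadratic-in-$f_Y$ shape $f_1^2 g_{Y^2} - 2 f_Y f_1 g_Y + f_Y^2 g_1$ (compare \eqref{eq:quad}), and it is this shape, together with $B_t = 0$, that forces the collapse at $\theta_t$. Once that structure is in view, the verification is a one-line substitution, and I do not anticipate a genuine obstacle.
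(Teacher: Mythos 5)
Your argument is correct, and it proves the lemma by a cleaner route than the paper does. The paper expands all the products in the numerator of \eqref{eq:univar4} into triple and quadruple sums (as in \eqref{eq:univar5} and \eqref{eq:univar6-2}), observes that $(1+n_t\theta)$ divides every summand except the ``diagonal'' ones with all indices equal to $t$, and computes that these diagonal terms sum to a multiple of $B_t$, which vanishes exactly when $m_t=1$. You instead evaluate the numerator at the root $\theta_t=-1/n_t$, using the closed forms $f_a(\theta_t)=m_tn_ta_tc$ and $g_a(\theta_t)=m_tn_t^2a_tc^2$, so that the perfect-square combinations $f_1^2g_{Y^2}-2f_Yf_1g_Y+f_Y^2g_1$ and $f_{Y^2}f_1-f_Y^2$ collapse identically and the remaining terms die with $d(\theta_t)$, $f_{B/m}(\theta_t)$, $g_{B/m}(\theta_t)$; this is the same underlying cancellation but with far less combinatorial bookkeeping, and your passage from the individual linear factors to $d_1(\theta)$ via pairwise coprimality is fine. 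What the paper's heavier expansion buys is the explicit residual term \eqref{eq:num3}, $B_tm_t^2n_t^4(N-m_t)\prod_{j\neq t}(1+n_j\theta)^4$, which is reused immediately afterwards to show that $(1+n_t\theta)$ does \emph{not} divide the numerator when $m_t\ge 2$ and $B_t\neq 0$ --- a fact needed in Lemma~\ref{claim2} and again in the REML proofs. Your evaluation recovers this at no extra cost if you redo it with general $m_t$ (one finds $h(\theta_t)=m_t^2n_t^4B_t(N-m_t)c^4$), but as written your proof establishes only the divisibility direction, which is all the lemma as stated requires.
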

\begin{lemma}\label{claim2}
  If $d_1(\theta)$ is cleared from both the numerator and the
  denominator of the rational function given in~\eqref{eq:univar4},
  then the new numerator and denominator are relatively prime
  for generic sufficient statistics $\bar Y_1, \ldots, \bar Y_M$, $W$,
  and $B_j$ with $ m_j\geq 2$.  
\end{lemma}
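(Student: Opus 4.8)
The goal is to show the numerator and denominator of \eqref{eq:univar4}, after removing the known factor $d_1(\theta)$ from both, share no further common factor for generic statistics. The denominator (before the removal) is $Nd(\theta)^2f_1(\theta)^2$; after dividing by $d_1(\theta)$ its irreducible factors are: the $(1+n_i\theta)$ with $i$ ranging over indices with $m_i\ge 2$ (coming from $d_1(\theta)d_2(\theta)^2/d_1(\theta)=d_1(\theta)d_2(\theta)^2$, wait — more carefully $d(\theta)^2=d_1(\theta)^2 d_2(\theta)^2$, so after removing one copy of $d_1$ we are left with $d_1(\theta)d_2(\theta)^2$), together with the irreducible factors of $f_1(\theta)$. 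Thus it suffices to check, factor by factor, that none of these divides the reduced numerator. I would organize the proof as three separate non-vanishing checks, each carried out by evaluating the numerator of \eqref{eq:univar4} (call it $P(\theta)$) at a root of the candidate factor and exhibiting that the result is a nonzero polynomial in the statistics.

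\textbf{Step 1: the factor $1+n_t\theta$ with $m_t\ge 2$.} Set $\theta_t=-1/n_t$. At this value every $f_a(\theta_t)$ and $g_a(\theta_t)$ collapses to a single surviving term, since $\prod_{j\ne i}(1+n_j\theta_t)$ vanishes unless $i=t$ (here I use that the group sizes indexed by distinct $i$ are distinct, so $1+n_j\theta_t\ne 0$ for $j\ne t$). Concretely $f_a(\theta_t)=m_tn_ta_t\prod_{j\ne t}(1+n_j\theta_t)$ and $g_a(\theta_t)=m_tn_t^2a_t\prod_{j\ne t}(1+n_j\theta_t)^2$. Substituting these into $P(\theta_t)$, one sees the $W$-term contributes $0$ (it carries an extra factor $d(\theta_t)=0$), and the remaining terms simplify: the combination $f_1^2g_{Y^2}-2f_Yf_1g_Y+f_Y^2g_1$ vanishes identically at $\theta_t$ because $f_Y(\theta_t)/f_1(\theta_t)=\bar Y_t$ makes it a perfect square $(\,f_1g_{Y^2}^{1/2}-\dots)$ — more precisely it equals $g_{Y^2}-2\bar Y_t f_1^{-1}f_1 g_Y/\dots$; the clean way is to note this is exactly $d(\theta)^2f_1(\theta)^2\big(s_{Y^2}-2(r_Y/r_1)s_Y+(r_Y/r_1)^2s_1\big)$ evaluated as a limit, and at $\theta_t$ the bracket equals $m_tn_t^2(\bar Y_t-\bar Y_t)^2\cdot(\dots)=0$. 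What remains is $P(\theta_t)=Nf_1(\theta_t)^2g_{B/m}(\theta_t)-f_1(\theta_t)^3 f_{B/m}(\theta_t)$, and since $f_1(\theta_t)\ne 0$, this is a nonzero polynomial in $B_t$ and the $\bar Y_i$ (its leading behavior in $B_t$ is $f_1(\theta_t)^2(Nn_t-m_tn_t\cdot f_1(\theta_t)/\dots)B_t$, visibly not identically zero). Hence $1+n_t\theta$ does not divide $P$.

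\textbf{Step 2: irreducible factors of $f_1(\theta)$.} Let $\xi$ be a root of $f_1(\theta)=d(\theta)r_1(\theta)$; note $d(\xi)\ne 0$ for generic configurations (indeed $f_1$ has degree $M-1$ and, as is classical, for pairwise-distinct-free data its roots interlace the poles $-1/n_i$, so $d(\xi)\neq 0$), hence $r_1(\xi)=0$. Evaluating $P(\xi)$: every term carrying a factor $f_1(\theta)$ dies, leaving only $P(\xi)=N f_Y(\xi)^2 g_1(\xi)$ from the first bracket minus nothing from the second (the $W$-term, $f_{Y^2}f_1$, and $f_1f_{B/m}$ all carry $f_1$; the term $-f_Y(\xi)^2$ survives with coefficient... wait, the second bracket is multiplied by $f_1(\theta)^2$ overall, so it vanishes entirely). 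Thus $P(\xi)=Nf_Y(\xi)^2 g_1(\xi)$. Now $g_1(\xi)=d(\xi)^2 s_1(\xi)$ and $s_1(\xi)=\sum m_in_i^2/(1+n_i\xi)^2>0$ is strictly positive (sum of positive terms, $\xi$ real — or nonzero as a polynomial identity when $\xi$ is complex), so $g_1(\xi)\ne 0$; and $f_Y(\xi)$ is a nonzero polynomial in $\bar Y_1,\dots,\bar Y_M$ (its coefficients $m_in_i\prod_{j\ne i}(1+n_j\xi)$ are not all zero since $d(\xi)\ne 0$ and, over distinct $n_i$, the vectors $(\prod_{j\ne i}(1+n_j\xi))_i$ are nonzero). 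So $P(\xi)\ne 0$ generically, and no factor of $f_1$ divides $P$.

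\textbf{The main obstacle.} The genuinely delicate point is Step 2's claim that $d(\xi)\ne 0$ for a root $\xi$ of $f_1$ — i.e. that $f_1$ and $d$ are coprime — and, relatedly, that $f_1$ has no repeated roots, so that "$f_1(\theta)$ divides $P$" really is refuted by a single evaluation rather than requiring a derivative computation. I would handle this by the classical partial-fractions / interlacing argument: writing $r_1(\theta)=\sum m_in_i/(1+n_i\theta)$ as a sum of simple poles with positive residues $m_in_i$ over distinct poles $-1/n_i$, standard theory gives that $r_1$ has exactly $M-1$ real zeros, one strictly between each consecutive pair of poles, all simple, and none equal to a pole; equivalently $f_1=d\cdot r_1$ has $M-1$ simple real roots disjoint from the roots of $d$. (If $f_1$ had a complex root it would be a root of the real polynomial $f_1$ of the same multiplicity as its conjugate, contradicting the count; so all $M-1$ roots are the real interlacing ones.) With coprimality and simplicity of the roots of $f_1$ in hand, Steps 1 and 2 complete the proof: every irreducible factor of the reduced denominator $d_1(\theta)d_2(\theta)^2 f_1(\theta)^2$ is either some $1+n_t\theta$ with $m_t\ge 1$ — handled in Step 1 for $m_t\ge 2$ and, for $m_t=1$, follows because Lemma~\ref{claim1} showed $d_1$ divides $P$ exactly once (so $(1+n_t\theta)^2\nmid P$, while the denominator has it to a higher power after we have only removed one copy — here one must be slightly careful and I would argue that $(1+n_t\theta)$ appears to the first power in the reduced denominator too, since the original denominator has $d_1^2 d_2^2$... actually $d^2=d_1^2d_2^2$ and removing $d_1$ leaves $d_1 d_2^2$, so $1+n_t\theta$ with $m_t=1$ appears once and is cancelled exactly, contributing nothing to the gcd) — or an irreducible factor of $f_1$, handled in Step 2. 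A final remark: the "generic" quantifier is used only to ensure the explicitly-exhibited nonzero polynomials in $\bar Y_i, W, B_j$ do not accidentally vanish; since each is a nonzero polynomial, its zero set is a proper algebraic subset, and the finite union over the (finitely many) factors checked is still a proper subset, which is the claim.
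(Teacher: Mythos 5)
Your Steps 1 and 2 are correct and essentially reproduce the paper's argument: the evaluation at $\theta_t=-1/n_t$ for $m_t\ge 2$ recovers the expression in \eqref{eq:num3}, and the evaluation at a root $\xi$ of $f_1$ reduces the question to $f_Y(\xi)^2g_1(\xi)\neq 0$, which you handle with the hyperplane argument for $f_Y$ and positivity of $g_1$ on the reals (your interlacing argument for the reality and simplicity of the roots of $f_1$ is a clean substitute for the paper's imaginary-part contradiction, and is fine).

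The genuine gap is in your treatment of the residual factor $d_1(\theta)$ that remains in the reduced denominator $Nd_1(\theta)d_2(\theta)^2f_1(\theta)^2$. You need $(1+n_t\theta)\nmid P/d_1$ for $m_t=1$, i.e.\ $(1+n_t\theta)^2\nmid P$, and you assert this follows because ``Lemma~\ref{claim1} showed $d_1$ divides $P$ exactly once.'' Lemma~\ref{claim1} shows only divisibility, not exact divisibility, and exact divisibility is not automatic: in fact the entire first block $N\bigl[f_1^2g_{Y^2}-2f_Yf_1g_Y+f_Y^2g_1+f_1^2g_{B/m}\bigr]$ \emph{is} divisible by $(1+n_t\theta)^2$ when $m_t=1$ (this is the content of the computation around \eqref{eq:univar5-2}: the only summands not carrying $(1+n_t\theta)^2$ have $r=i=t$ or $r=k=t$, where the quadratic form in the $\bar Y$'s vanishes and $B_t=0$). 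Consequently $(1+n_t\theta)^2\mid P$ if and only if $(1+n_t\theta)$ divides $Wf_1(\theta)d_2(\theta)+F(\theta)$ with $F=\bigl(f_{Y^2}f_1-f_Y^2+f_1f_{B/m}\bigr)/d_1$, and ruling this out requires a separate genericity argument — the paper does it by exhibiting special data ($W=\bar Y_1=\cdots=\bar Y_M\neq 0$, all $B_j'=0$) at which $Wf_1(\theta_k)d_2(\theta_k)+F(\theta_k)\neq 0$ for every root $\theta_k$ of $d_1$, so that the relevant zero locus is a proper subvariety of the space of sufficient statistics. Your proposal contains no substitute for this step; a single evaluation of $P$ at $-1/n_t$ cannot detect it since $P(-1/n_t)=0$ already, and the required statement is genuinely a generic (not identical) non-vanishing in $W$.
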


\begin{proof}[Proof of Lemma~\ref{claim1}]
  Let $m_{t}=1$. To show that $(1+n_{t} \theta)$ divides the
  numerator, it is sufficient to show $(1+n_{t} \theta)$ divides the
  sum of
  \begin{equation}
    \label{eq:num1a}
    N\left[f_1(\theta)^2g_{Y^2}(\theta)-2f_Y(\theta)f_1(\theta)g_Y(\theta)+
    f_Y(\theta)^2g_1(\theta)+f_1(\theta)^2g_{B/m}(\theta)\right]
  \end{equation}
  and
  \begin{equation}
    \label{eq:num1b}
    -f_1(\theta)^2[f_{Y^2}(\theta)f_1(\theta) - f_Y(\theta)^2
    +f_1(\theta)f_{B/m}(\theta)]. 
  \end{equation} 
  
  The product $f_1(\theta)^2g_{Y^2}(\theta)$ in the first
  term of~(\ref{eq:num1a}) may be rewritten as
  \begin{align*}
    &\left[ \sum_{i=1}^M m_i n_i \prod_{j \neq i} (1+n_j \theta) \right]\left[
    \sum_{k=1}^M m_k n_k \prod_{l \neq k} (1+n_l \theta) \right]\left[
    \sum_{r=1}^M m_r n_r^2 \bar{Y}_r^2\prod_{s \neq r} (1+n_s
    \theta)^2 \right] \\
    &= \sum_{i=1}^M   \sum_{k=1}^M   \sum_{r=1}^M m_i m_k m_r n_i  n_k
    n_r^2 \bar{Y}_r^2 \prod_{j \neq i} (1+n_j \theta) \prod_{l \neq k}
    (1+n_l \theta) \prod_{s \neq r} (1+n_s \theta)^2 .
  \end{align*}
  Combining this expression with the analogous expansions of the other
  three terms shows that the polynomial in (\ref{eq:num1a}) is equal
  to $N$ times
  \begin{multline}
    \label{eq:univar5}
    \sum_{i=1}^M   \sum_{k=1}^M   \sum_{r=1}^M \bigg[ (m_r
      \bar{Y}_r^2-2m_r\bar{Y}_i \bar{Y}_r + m_r \bar{Y}_i \bar{Y}_k
      +B_r) m_i m_k n_i  n_k  n_r^2 \\
      \times \prod_{j \neq i} (1+n_j
      \theta) \prod_{l \neq k} (1+n_l \theta) \prod_{s \neq r} (1+n_s
      \theta)^2 \bigg] .
  \end{multline}
  The polynomial in~\eqref{eq:num1b} can be expanded similarly.  We
  find
  \begin{align}
    \label{eq:univar6}
    &f_{Y^2}(\theta)f_1(\theta) - f_Y(\theta)^2 +f_1(\theta)f_{B/m}(\theta)\\
    =&\sum_{i=1}^M \sum_{k=1}^M (m_k \bar{Y}_i ^2-m_k \bar{Y}_i \bar{Y}_k+B_k) m_i n_i n_k  \prod_{j \neq i} (1+n_j \theta)  \prod_{l \neq k} (1+n_l \theta). \notag 
  \end{align}
  Expanding $f_1(\theta)^2$ as well, we obtain that the polynomial
  in~\eqref{eq:num1b} is equal to  
  \begin{multline}
    \label{eq:univar6-2}
    -\sum_{i=1}^M \sum_{k=1}^M \sum_{r=1}^M \sum_{u=1}^M \bigg[(m_k \bar{Y}_i
    ^2-m_k \bar{Y}_i \bar{Y}_k+B_k) m_i m_r m_u n_i n_k n_r n_u  \\
    \prod_{j \neq i} (1+n_j \theta) \prod_{l \neq k} (1+n_l
    \theta)\prod_{s \neq r} (1+n_s \theta)\prod_{v \neq u} (1+n_v
    \theta)\bigg].
  \end{multline} 
  
  Now notice that $(1+n_{t}\theta)$ divides every summand in
  (\ref{eq:univar5}) and (\ref{eq:univar6-2}) unless $i=k=r=t$ in the
  first summation, or $i=k=r=u=t$ in the second summation.  So it
  suffices to only consider these `diagonal' terms.  However, under
  the equality of indices, the quadratic expressions in the averages
  $\bar{Y}_i$ cancel.  Hence, the terms missing a factor of $(1+n_{t}
  \theta)$ in (\ref{eq:num1a}) and (\ref{eq:num1b}) sum to
  \begin{align}
    \label{eq:num3}
    B_{t} n_{t}m_{t}^2n_{t}^4\left(N- m_{t}\right) \prod_{j \neq t}
    (1+n_j \theta)^4. 
  \end{align}
  Throughout the paper, we assume that we have at least two groups
  with at least one group size $n_i \geq 2$.  
  Moreover, for generic data, $B_t=0$ if and only if $m_t=1$.  Hence,
  for generic data, the expression in~(\ref{eq:num3}) is zero if and
  only if $m_{t}=1$.  We conclude that $d_1(\theta)$ divides the
  numerator of the rational function in~\eqref{eq:univar4}.
\end{proof}

Note that the last part of the above proof shows not only that
$d_1(\theta)$ divides the numerator of~\eqref{eq:univar4}, but that
$(1+n_{t}\theta)$ does not divide the numerator when $B_t\not=0$,
which holds generically if $m_t\ge 2$.

\begin{proof}[Proof of Lemma~\ref{claim2}]
  Clearing $d_1(\theta)$ from the denominator in~\eqref{eq:univar4}
  yields the polynomial $Nd_2(\theta)d(\theta)f_1(\theta)^2$.  From
  the preceding comment, we know that $d_2(\theta)$ and the numerator
  are relatively prime for generic data $\bar Y_1, \ldots, \bar Y_M$,
  $W>0$, and $B_j>0$ with $ m_j\geq 2$.  To establish our claim, we
  will first show that $f_1 (\theta)$ does not share a common factor
  with the numerator by showing $f_1 (\theta)$ and
  $f_{Y}(\theta)^2g_1(\theta)$ to be relatively prime; all terms other
  than $f_{Y}(\theta)^2g_1(\theta)$ in the numerator of
  \eqref{eq:univar4} are multiples of $f_1(\theta)$.  Then, we will
  show that after clearing $d_1(\theta)$ in~\eqref{eq:univar4},
  $d_1(\theta)$ and the new numerator are relatively prime.
  
  Let $\theta_1,\dots,\theta_{M-1}$ be the (possibly complex) roots of
  the degree $M-1$ polynomial $f_1(\theta)$.  For each $1\le k\le
  M-1$, consider the linear form $f_{Y}(\theta_k)$ in the polynomial
  ring $\mathbb{C}[\bar Y_1,\dots,\bar Y_M]$. Let
  $V(f_{Y}(\theta_k))\subset\mathbb{C}^M$ be the zero locus of
  $f_{Y}(\theta_k)$.  Each set $V(f_{Y}(\theta_k))$ is a hyperplane of
  dimension $M-1$.  Thus, the union $\cup_{k=1}^{M-1}
  V(f_{Y}(\theta_k))$ is an $M-1$ dimensional algebraic subset of
  $\mathbb{C}^M$.  A generic vector of group means $(\bar
  Y_1,\dots,\bar Y_M)$ lies outside this lower-dimensional set, which
  means that $f_1(\theta)$ and $f_Y(\theta)$ are relatively prime for
  generic data.
  
  To show that $f_1(\theta)$ and $g_1(\theta)$ are relatively prime,
  assume $\theta_0= a + ib$ is a root of $f_1(\theta)$ and
  $g_1(\theta)$.  Since $g_1(\theta)$ is a sum of squares that is
  positive on $\mathbb{R}$, we must have $\theta_0 \notin \mathbb{R}$
  and hence $b \neq 0$.  Without loss of generality, let $n_1$ be the
  least of the group sizes $n_i$.  Rewriting $f_1(\theta_0)=0$, we get
  \begin{align}
    \label{eq:lem4}
    n_1&= - \frac{\sum_{i=2}^M m_i n_i \prod_{j\not=i} (1+n_j\theta
      _0)}{m_1\prod_{j \neq 1} (1+n_j\theta_0)} =-\sum_{i=2}^M \frac{
      m_i n_i (1+n_1 \theta_0)}{m_1 (1+n_i \theta_0)}.
  \end{align}
  The imaginary part of the right side of this equation must equal 0
  since $n_1$ is an integer.  Substituting $a+ib$ for $\theta_0$, the
  imaginary part of ~(\ref{eq:lem4}) is
  \[ 
  b \sum_{i=2}^M  \left( \frac{m_i n_i}{m_1} \right)
  \frac{(n_i-n_1)}{(1+n_ia)^2+(n_ib)^2}. 
  \]
  Since each term in the sum is positive, we obtain that $b=0$.
  Consequently, $\theta_0 \in \mathbb{R}$, which is a contradiction.
  Therefore, $f_1(\theta)$ and $g_1(\theta)$ are relatively prime.
  
  It remains to show that the numerator and denominator obtained by
  clearing the factor $d_1(\theta)$ in~\eqref{eq:univar4} are
  relatively prime for generic data.  We claim that if $m_{t}=1$ then
  $(1+n_{t}\theta)$ divides  
  \begin{equation}
    \label{eq:univar5-div-d1}
  \frac{f_1(\theta)^2g_{Y^2}(\theta)-
    2f_Y(\theta)f_1(\theta)g_Y(\theta)+
    f_Y(\theta)^2g_1(\theta)f_1(\theta)^2g_{B/m}(\theta)}{d_1(\theta)},
  \end{equation}
  while $d_1(\theta)$ and
  \begin{align}
    \label{eq:relprime1}
    Wf_1(\theta)d_2(\theta)+\frac{f_{Y^2}(\theta)f_1(\theta) -
      f_Y(\theta)^2+f_1(\theta)f_{B/m}(\theta)}{d_1(\theta)}=:
    Wf_1(\theta)d_2(\theta)+F(\theta)  
  \end{align}
  are relatively prime for generic data.
  
  The ratio in (\ref{eq:univar5-div-d1}) equals \eqref{eq:univar5}
  divided by $d_1(\theta)$.  We may rewrite  \eqref{eq:univar5} as
  \begin{multline}
    \label{eq:univar5-2}
    \sum_{i=1}^M   \sum_{k=1}^M   \sum_{r=1}^M \bigg[ (m_r
      \bar{Y}_r^2-m_r\bar{Y}_i \bar{Y}_r -m_r\bar Y_k\bar Y_r+ m_r \bar{Y}_i \bar{Y}_k
      +B_r) m_i m_k n_i  n_k  n_r^2 \\
      \times \prod_{j \neq i} (1+n_j
      \theta) \prod_{l \neq k} (1+n_l \theta) \prod_{s \neq r} (1+n_s
      \theta)^2 \bigg] .
  \end{multline}
  It is clear that the square $(1+n_t\theta)^2$ divides all terms in
  the sum \eqref{eq:univar5-2} except those for $r=i=t$ or $r=k=t$.
  However, the quadratic form in the averages $\bar Y_{i}$ vanishes
  if $r=i$ or $r=k$.  Since the terms in question have $r=t$, and
  $B_r=B_t=0$ because $m_t=1$, we conclude that $(1+n_t\theta)^2$
  divides the entire sum \eqref{eq:univar5-2}, which proves that
  $d_1(\theta)$ divides the ratio in (\ref{eq:univar5-div-d1}).  
  
  We are left to show that $d_1(\theta)$ and
  $Wf_1(\theta)d_2(\theta)+F(\theta) $ are relatively prime for
  generic data.  Let $\theta_1,\dots,\theta_{M-M_2}$ be the roots of
  $d_1(\theta)$; each root is equal to $-1/n_i$ for some index $i$.
  Since the $n_i$ are distinct, no root of $d_1(\theta)$ is a root of
  $d_2(\theta)$.  Moreover, it is easy to see that no root of
  $d_1(\theta)$ is a root of $f_1(\theta)$.  Now let $I$ be the ideal
  generated by the $M-M_2$ polynomials
  $Wf_1(\theta_k)d_2(\theta_k)+F(\theta_k)$ in the polynomial ring $
  \mathbb{C}[W, \bar Y_1, \ldots \bar Y_M, B'_1, \ldots B'_{M_2}]$,
  where the $B'_i$ stand for the between-group sums of squares $B_i$
  with multiplicity $m_i\geq 2$.  Pick sufficient statistics $W=\bar
  Y_1= \ldots = \bar Y_M \neq 0$ and $B'_1=\ldots =B'_M=0$. Since no
  root of $d_1(\theta)$ is a root of $d_2(\theta)$ or $f_1(\theta)$,
  \eqref{eq:univar6} implies that for these special data
  $Wf_1(\theta_k)d_2(\theta_k)+F(\theta_k) \neq 0$ for each $k$.  The
  zero locus $V(I)$ is thus a proper algebraic subset of
  $\mathbb{C}^{M+M_2+1}$.  Such a set is of lower dimension and, thus,
  $d_1(\theta)$ and $Wf_1(\theta)d_2(\theta)+F(\theta) $ are
  relatively prime for generic data.  
\end{proof}


\bigskip
\section{Proof of formula for REML degree}
\label{sec:REMLproof}

For the proof of the REML degree formula in Theorem~\ref{thm:MAIN}, we
proceed in the same way as for the ML degree. We begin by deriving the
univariate rational function whose number of roots is the REML degree.

\begin{lemma} \label{lem:REML}
  Consider the rational function whose numerator is
  \begin{align}\label{eq:univarNum}
    &(g_1(\theta)-f_1(\theta)^2)
    [Wf_1(\theta)d(\theta)+f_{Y^2}(\theta)f_1(\theta)-f_Y(\theta)^2+f_1(\theta)f_{B/m}]
    + \\ 
    \notag &(N-1) \left[f_1(\theta)^2g_{Y^2}(\theta)-
      2f_Y(\theta)f_1(\theta)g_Y(\theta) +
    f_Y(\theta)^2g_1(\theta)+f_1(\theta)^2g_{B/m} (\theta) \right]  
  \end{align}
  and denominator is 
  \begin{align}\label{eq:univarDenom}
    {d(\theta)f_1(\theta)\left[Wf_1(\theta)d(\theta)+
        f_{Y^2}(\theta)f_1(\theta)-f_Y(\theta)^2+f_1(\theta)f_{B/m}\right]}. 
  \end{align}
  The REML degree is the degree of the numerator of this rational
  function after clearing common factors from the given numerator and
  denominator. 
\end{lemma}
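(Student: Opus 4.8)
The plan is to follow the same route as in the proof of Lemma~\ref{lemma:univRatlEqn}, but now starting from the two restricted score equations $\partial\bar\ell/\partial\kappa=0$ and $\partial\bar\ell/\partial\theta=0$ given in~\eqref{eq:reml-partial-kappa} and~\eqref{eq:reml-partial-theta}. First I would record, via~\eqref{eq:mu-sol2}, that the profiled mean is $\hat\mu(\theta)=r_Y(\theta)/r_1(\theta)$, and reuse the identity~\eqref{eq:quad}, i.e.\ $\sum_i\frac{m_in_i}{1+n_i\theta}(\bar Y_i-\hat\mu(\theta))^2=r_{Y^2}(\theta)-r_Y(\theta)^2/r_1(\theta)$. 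Solving $\partial\bar\ell/\partial\kappa=0$ for $\kappa$ then yields $\hat\kappa(\theta)=(N-1)/\bigl(W+r_{Y^2}(\theta)+r_{B/m}(\theta)-r_Y(\theta)^2/r_1(\theta)\bigr)$, which, after expressing the $r$-functions through $f_a=d\,r_a$, simplifies to
\[
\hat\kappa(\theta)=\frac{(N-1)\,d(\theta)f_1(\theta)}{P(\theta)},\qquad P(\theta):=Wf_1(\theta)d(\theta)+f_{Y^2}(\theta)f_1(\theta)-f_Y(\theta)^2+f_1(\theta)f_{B/m}(\theta).
\]
Here $P(\theta)$ is precisely the bracketed factor appearing in the claimed denominator~\eqref{eq:univarDenom}.

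Next I would substitute $\hat\mu(\theta)$ and $\hat\kappa(\theta)$ into $\partial\bar\ell/\partial\theta=0$. One point to flag: unlike the $r$-weighted quadratic form in~\eqref{eq:quad}, its $s$-weighted analogue does \emph{not} collapse, so one keeps $\sum_i\frac{m_in_i^2}{(1+n_i\theta)^2}(\bar Y_i-\hat\mu(\theta))^2=s_{Y^2}(\theta)-2\frac{r_Y(\theta)}{r_1(\theta)}s_Y(\theta)+\frac{r_Y(\theta)^2}{r_1(\theta)^2}s_1(\theta)$, exactly as in the ML equation~\eqref{eq:univar}. Translating everything to the polynomials $f_a$ and $g_a=d^2s_a$ and using $s_1/r_1-r_1=(g_1-f_1^2)/(d f_1)$ together with $s_{Y^2}-2(r_Y/r_1)s_Y+(r_Y^2/r_1^2)s_1+s_{B/m}=(f_1^2g_{Y^2}-2f_Yf_1g_Y+f_Y^2g_1+f_1^2g_{B/m})/(d^2f_1^2)$, the equation $\partial\bar\ell/\partial\theta=0$ takes the form
\[
\frac{g_1(\theta)-f_1(\theta)^2}{d(\theta)f_1(\theta)}+\frac{(N-1)\bigl(f_1(\theta)^2g_{Y^2}(\theta)-2f_Y(\theta)f_1(\theta)g_Y(\theta)+f_Y(\theta)^2g_1(\theta)+f_1(\theta)^2g_{B/m}(\theta)\bigr)}{P(\theta)\,d(\theta)f_1(\theta)}=0.
\]
Putting this over the common denominator $d(\theta)f_1(\theta)P(\theta)$ produces exactly the numerator~\eqref{eq:univarNum} over the denominator~\eqref{eq:univarDenom}.

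Finally I would check that the reductions are harmless for generic data: $\partial\bar\ell/\partial\kappa=0$ has $\hat\kappa(\theta)$ as its unique solution, which is finite and nonzero off the zero set of $P$, and dividing by the generically nonzero quantities $r_1(\theta)$ and $\hat\kappa(\theta)$ discards no finite critical point; hence the critical points of $\bar\ell$ correspond bijectively to the solutions of the displayed rational equation via $\kappa=\hat\kappa(\theta)$. By the definition of the REML degree as the number of such complex solutions for generic sufficient statistics $\bar Y_1,\dots,\bar Y_M$, $W$, and the $B_j$ with $m_j\ge2$, this number equals the degree of the numerator once numerator and denominator have been reduced to lowest terms, which is the assertion of the lemma.

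I expect the only delicate part to be bookkeeping rather than any real difficulty: one must track which powers of $d(\theta)$ and $f_1(\theta)$, and which single copy of $P(\theta)$, are contributed respectively by the substitution $\hat\mu=r_Y/r_1$, by the term $s_1/r_1$, and by $\hat\kappa$, so that the common denominator comes out as $d f_1 P$ and not, say, $d^2 f_1^2 P$. The actual value $2M+2M_2-3$ of this degree is not part of the present lemma; as in the ML case it will follow from subsequent lemmas that identify the common factors of~\eqref{eq:univarNum} and~\eqref{eq:univarDenom} and the cancellations among them.
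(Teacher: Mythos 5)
Your proposal is correct and follows essentially the same route as the paper: solve $\partial\bar\ell/\partial\kappa=0$ for $\hat\kappa(\theta)$, substitute into $\partial\bar\ell/\partial\theta=0$, and form the common denominator $d(\theta)f_1(\theta)P(\theta)$; the paper merely compresses the algebra you spell out (the identities $\hat\kappa=(N-1)df_1/P$, $s_1/r_1-r_1=(g_1-f_1^2)/(df_1)$, and the $g$-polynomial form of the $s$-weighted quadratic form) into the phrase ``simplify and rewrite, forming a common denominator.'' Your bookkeeping checks out and yields exactly the numerator~\eqref{eq:univarNum} and denominator~\eqref{eq:univarDenom}.
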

\begin{proof}
The equation $\partial\bar\ell/\partial\kappa=0$ has the unique solution
\[
\hat\kappa(\theta) = \frac{N-1}{W+\sum_{i=1}^M
  \frac{m_in_i}{1+n_i\theta}(\bar Y_i-\hat\mu(\theta))^2
  +\sum_{i=1}^M \frac{n_i}{1+n_i \theta} B_i};
\]
compare~(\ref{eq:reml-partial-kappa}).  Substituting
$\hat\kappa(\theta)$ into the partial derivative
$\partial\bar\ell/\partial\theta$ yields the univariate function
\begin{align}
  \label{eq:univarREML}
  -\sum_{i=1}^M \frac{m_in_i}{1+n_i\theta} &+ \frac{\sum_{i=1}^M
    \frac{m_in_i^2}{(1+n_i\theta)^2}}{\sum_{i=1}^M
    \frac{m_in_i}{1+n_i\theta}}  \\ 
  \notag  & +\hat\kappa(\theta) \left[ \sum_{i=1}^M
    \frac{m_in_i^2}{(1+n_i\theta)^2}(\bar Y_i
    -\hat\mu(\theta))^2+\sum_{i=1}^M \frac{n_i^2}{(1+n_i \theta)^2}
    B_i\right]=0; 
\end{align}
recall~(\ref{eq:reml-partial-theta}).  We can now simplify and rewrite
\eqref{eq:univarREML}, forming a common denominator, to obtain the
desired rational function.
\end{proof}

The degree of the numerator in Lemma~\ref{lem:REML} is $4M-3$, but it
shares common factors with the denominator.  In fact, in the proof of
Lemma~\ref{claim1}, we have shown that $d_1(\theta)$ divides
$f_{Y^2}(\theta)f_1(\theta)-f_Y(\theta)^2+f_1(\theta)f_{B/m}$. Thus,
$d_1(\theta)^2$ divides the denominator from~\eqref{eq:univarDenom}.
To prove Theorem \ref{thm:MAIN}, it remains to prove the following two
facts.

\begin{lemma}\label{claim1reml}
  The polynomial $d_1(\theta)^2$ divides the numerator
  \eqref{eq:univarNum}.
\end{lemma}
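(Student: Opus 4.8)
The plan is to reduce the claim to showing that $(1+n_t\theta)^2$ divides the numerator \eqref{eq:univarNum} for each index $t$ with $m_t=1$. Since distinct group sizes yield distinct values $n_t$, the polynomial $d_1(\theta)=\prod_{\{t:\,m_t=1\}}(1+n_t\theta)$ is squarefree, and these local statements then combine to give $d_1(\theta)^2\mid{}$\eqref{eq:univarNum}. Write \eqref{eq:univarNum} as $(g_1-f_1^2)\,B^*+(N-1)\,G^*$, where $B^*=Wf_1d+f_{Y^2}f_1-f_Y^2+f_1f_{B/m}$ is the bracketed factor multiplying $g_1-f_1^2$ and $G^*=f_1^2g_{Y^2}-2f_Yf_1g_Y+f_Y^2g_1+f_1^2g_{B/m}$ is the one multiplying $N-1$. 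It then suffices to establish the three divisibilities $d_1\mid B^*$, $d_1\mid g_1-f_1^2$, and $d_1^2\mid G^*$, since together they force $d_1^2$ to divide each of the two summands of \eqref{eq:univarNum}.

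Two of these are essentially already in hand. The proof of Lemma~\ref{claim1} shows $d_1\mid f_{Y^2}f_1-f_Y^2+f_1f_{B/m}$, so with $F$ as in \eqref{eq:relprime1} and $d=d_1d_2$ we may write $B^*=d_1\bigl(Wf_1d_2+F\bigr)$, whence $d_1\mid B^*$. The computation around \eqref{eq:univar5-2} in the proof of Lemma~\ref{claim2} shows directly that $(1+n_t\theta)^2\mid G^*$ whenever $m_t=1$, and hence $d_1^2\mid G^*$. The one genuinely new point, $d_1\mid g_1-f_1^2$, I would obtain by evaluating at a root $\theta_0=-1/n_t$ of $d_1$, where $m_t=1$: in $f_1(\theta)=\sum_i m_in_i\prod_{j\ne i}(1+n_j\theta)$ every summand with $i\ne t$ carries the vanishing factor $1+n_t\theta_0$, leaving $f_1(\theta_0)=n_t\prod_{j\ne t}(1+n_j\theta_0)$; the identical cancellation in $g_1(\theta)=\sum_i m_in_i^2\prod_{j\ne i}(1+n_j\theta)^2$ leaves $g_1(\theta_0)=n_t^2\prod_{j\ne t}(1+n_j\theta_0)^2=f_1(\theta_0)^2$. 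Thus $g_1-f_1^2$ vanishes at every root of the squarefree polynomial $d_1$, so $d_1\mid g_1-f_1^2$.

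Combining the three facts, $(g_1-f_1^2)B^*$ is divisible by $d_1\cdot d_1=d_1^2$ and $(N-1)G^*$ is divisible by $d_1^2$, so $d_1^2$ divides \eqref{eq:univarNum}. I do not anticipate a real obstacle: the two substantial divisibilities were already isolated in Section~\ref{sec:MLproof}, and the new ingredient reduces to a single evaluation at the roots of $d_1$. The only thing requiring care is the bookkeeping of the orders of vanishing at each $-1/n_t$ — one factor of $(1+n_t\theta)$ contributed by $B^*$, one by $g_1-f_1^2$, and two by $G^*$ — so that they assemble into the full square $d_1^2$ rather than merely $d_1$.
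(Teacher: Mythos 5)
Your proof is correct and follows essentially the same route as the paper: the same decomposition of \eqref{eq:univarNum} into $(g_1-f_1^2)B^*+(N-1)G^*$, the same two facts recycled from the proofs of Lemmas~\ref{claim1} and~\ref{claim2}, and the same new ingredient $d_1\mid g_1-f_1^2$. The only (immaterial) difference is that you verify this last divisibility by evaluating at the roots $-1/n_t$ of the squarefree polynomial $d_1$, whereas the paper expands $g_1-f_1^2$ and inspects the diagonal terms.
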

\begin{lemma}\label{claim2reml}
  After clearing $d_1(\theta)^2$ from \eqref{eq:univarNum} and
  \eqref{eq:univarDenom}, the new numerator and new denominator are
  relatively prime for generic data.
\end{lemma}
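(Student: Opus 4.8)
The plan is to rerun the three-step argument from the proof of Lemma~\ref{claim2}, carrying along the extra denominator factor that is special to REML. Write $C(\theta)=Wf_1(\theta)d(\theta)+f_{Y^2}(\theta)f_1(\theta)-f_Y(\theta)^2+f_1(\theta)f_{B/m}(\theta)$ and $A(\theta)=f_1(\theta)^2g_{Y^2}(\theta)-2f_Y(\theta)f_1(\theta)g_Y(\theta)+f_Y(\theta)^2g_1(\theta)+f_1(\theta)^2g_{B/m}(\theta)$ for the two brackets appearing in \eqref{eq:univarNum}, so that the numerator \eqref{eq:univarNum} equals $(g_1-f_1^2)\,C+(N-1)\,A$ and the denominator \eqref{eq:univarDenom} equals $d\,f_1\,C$. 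I would first record four divisibilities: $d_1\mid d$ is definitional; $d_1\mid\bigl(f_{Y^2}f_1-f_Y^2+f_1f_{B/m}\bigr)$ is established in the proof of Lemma~\ref{claim1}, so that $C=d_1Q$ with $Q=Wf_1d_2+F$, the polynomial introduced in~\eqref{eq:relprime1}; $d_1^2\mid A$ is established in the proof of Lemma~\ref{claim2}; and $d_1\mid(g_1-f_1^2)$, because at a root $-1/n_t$ of $d_1$, where $m_t=1$, both $g_1$ and $f_1^2$ take the value $n_t^2\prod_{j\ne t}(1-n_j/n_t)^2$. Setting $h=(g_1-f_1^2)/d_1$ and $\widetilde A=A/d_1^2$, this gives \eqref{eq:univarNum} $=d_1^2\bigl(hQ+(N-1)\widetilde A\bigr)$ and \eqref{eq:univarDenom} $=d_1^2\,d_2f_1Q$, so after clearing $d_1^2$ I must show that $P:=hQ+(N-1)\widetilde A$ is relatively prime to $d_2f_1Q$ for generic data, that is, that $P$ shares no root with any of $f_1$, $d_2$, or $Q$.

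The factors $f_1$ and $d_2$ I would handle by evaluating \eqref{eq:univarNum} at their roots and dividing by $d_1^2$. At a root $\theta_k$ of $f_1$, every term of $C$ and of $A$ that carries a factor of $f_1$ vanishes, leaving $C(\theta_k)=-f_Y(\theta_k)^2$ and $A(\theta_k)=f_Y(\theta_k)^2g_1(\theta_k)$, hence $P(\theta_k)=(N-2)\,f_Y(\theta_k)^2g_1(\theta_k)/d_1(\theta_k)^2$; this is nonzero for generic $\bar Y$ since $N\ge 3$ and, by the proof of Lemma~\ref{claim2}, $f_1$ is relatively prime to each of $f_Y$, $g_1$, and $d_1$. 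At a root $-1/n_t$ of $d_2$ (so $m_t\ge 2$), only the index-$t$ summand survives in each $f_a$ and $g_a$, exactly as in the diagonal-term computation in the proof of Lemma~\ref{claim1}; pushing it through gives $P(-1/n_t)=(N-m_t)\,m_t^2n_t^4\,\pi_t^4\,B_t/d_1(-1/n_t)^2$ with $\pi_t=\prod_{j\ne t}(1-n_j/n_t)\ne 0$, which is nonzero because the standing sample-size assumptions force $N>m_t$ and $B_t\ne 0$ holds generically when $m_t\ge 2$. Hence $\gcd(P,f_1)=\gcd(P,d_2)=1$ for generic data.

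The factor $Q$ is the substantive one. The key observations are that $P\equiv(N-1)\widetilde A\pmod{Q}$, so that $\gcd(P,Q)=\gcd(\widetilde A,Q)$, and that $\widetilde A$ does not involve the statistic $W$, whereas $Q=Wf_1d_2+F$ is affine in $W$ with $\deg F<\deg(f_1d_2)$. Fix generic values of $\bar Y_1,\dots,\bar Y_M$ and of the $B_j$ with $m_j\ge 2$. The same evaluation of $A$ at the roots of $f_1$ and of $d_2$ that was used above shows that $\widetilde A=A/d_1^2$ does not vanish at any of those roots, i.e.\ $\widetilde A$ and $f_1d_2$ are relatively prime. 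As $W\to\infty$ the $\deg(f_1d_2)$ roots of $Q$ converge to the roots of $f_1d_2$, hence miss the finitely many roots of $\widetilde A$ once $|W|$ is large; equivalently, $\mathrm{Res}_\theta(\widetilde A,Q)$ is a polynomial in $W$ whose leading coefficient is a nonzero multiple of $\mathrm{Res}_\theta(\widetilde A,f_1d_2)\ne 0$, so it is not identically zero. Therefore the data with $\gcd(\widetilde A,Q)\ne 1$ form a proper algebraic subvariety of the space of sufficient statistics, giving $\gcd(P,Q)=1$ generically. This establishes the lemma; combined with the degree computation $\deg P=\deg(hQ)=2M+2M_2-3$, it yields the REML part of Theorem~\ref{thm:MAIN}.

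The one place I expect to need a genuinely new idea, as opposed to bookkeeping and reuse of the ML cancellation facts, is this last step: recognizing that reduction modulo $Q$ collapses the numerator onto the $W$-free polynomial $\widetilde A$, and then letting $W$ vary so that the roots of $Q$ are driven onto the roots of $f_1d_2$, where $\widetilde A$ is known not to vanish for generic data.
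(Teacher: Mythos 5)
Your proposal is correct and follows essentially the same route as the paper: the same three coprimality checks (evaluation at roots of $f_1(\theta)$ giving $(N-2)f_Y^2g_1$, evaluation at roots of $d_2(\theta)$ giving a nonzero multiple of $B_t$, and a resultant-in-$W$ argument for the factor $Q=Wf_1d_2+F$ exploiting that the other bracket is $W$-free). Your ``$W\to\infty$'' phrasing of the last step is just a repackaging of the paper's choice of $W$ avoiding finitely many values $-F(\theta_0)/(d_2(\theta_0)f_1(\theta_0))$, so there is nothing substantively different to report.
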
 

\begin{proof}[Proof of Lemma \ref{claim1reml}]
  From the proof of Lemma~\ref{claim1}, we know that $d_1(\theta)$
  divides the polynomial
  $f_{Y^2}(\theta)f_1(\theta)-f_Y(\theta)^2+f_1(\theta)f_{B/m}$.
  Moreover, as shown in the proof of Lemma~\ref{claim2}, the square
  $d_1(\theta)^2$ divides
  \[
  f_1(\theta)^2g_{Y^2}(\theta)-2f_1(\theta)f_Y(\theta)g_Y(\theta) +
  f_Y(\theta)^2g_1(\theta)+f_1(\theta)^2g_{B/m} (\theta).
  \]
  To complete the proof of the present lemma, it suffices to show that
  $d_1(\theta)$ divides $g_1(\theta)-f_1(\theta)^2$.  However, with
  some distributing and grouping, we see
  \begin{align}   
    &g_1(\theta) -f_1(\theta)^2 = \notag\\
    &=\sum_{i=1}^M m_i n_i^2 \prod_{j\not=i}
    (1+n_j\theta)^2-\sum_{i=1}^M \sum_{k=1}^M m_i m_k n_i n_k
    \prod_{j\not=i} 
    (1+n_j\theta)\prod_{l\not=k}(1+n_j\theta) \notag \\
    &=\sum_{i=1}^M (m_i -m_i^2) \prod_{j \neq i}
    (1+n_j\theta)-\sum_{i=1}^M \sum_{k>i}^M 2n_i n_k \prod_{j\not=i}
    (1+n_j\theta)\prod_{l\not=k}(1+n_j\theta), \notag 
  \end{align}
  which is divisible by $(1+n_{t}\theta)$ if and only if $m_{t}=1$. 
\end{proof}

\begin{proof}[Proof of Lemma \ref{claim2reml}]
  We first show that if $m_{t} \geq 2$, then, for generic data,
  $(1+n_{t}\theta)$ and the numerator from~\eqref{eq:univarNum} are
  relatively prime.  Consider
  \begin{multline}
    \label{eq:remlnum1}
    (g_1(\theta)-f_1(\theta)^2)\left[f_{Y^2}(\theta)f_1(\theta) -
    f_Y(\theta)^2 +f_1(\theta)f_{B/m}(\theta)\right]+\\ 
    (N-1)[f_1(\theta)^2g_{Y^2}(\theta)-2f_Y(\theta)f_1(\theta)g_Y(\theta)+
    f_Y(\theta)^2g_1(\theta)+g_{B/m}(\theta)f_1(\theta)^2]. 
  \end{multline}  
  Using the results from the proof of Lemma~\ref{claim1} and writing
  out the involved summations, \eqref{eq:remlnum1} is seen to be equal
  to
  \begin{multline}
    \label{eq:num2}
    \left( \sum_{i=1}^M \sum_{k=1}^M \sum_{r=1}^M  (m_k \bar{Y}_i
    ^2-m_k \bar{Y}_i \bar{Y}_k+B_k) m_i m_r  n_i n_k n_r^2  \right. \\ 
    \left.\quad\quad\quad \prod_{j \neq i} (1+n_j \theta) \prod_{l
    \neq k} (1+n_l \theta)\prod_{s \neq r} (1+n_s \theta)^2  \right) 
    \\  
    -\left(  \sum_{i=1}^M \sum_{k=1}^M \sum_{r=1}^M \sum_{u=1}^M (m_k
    \bar{Y}_i ^2-m_k \bar{Y}_i \bar{Y}_k+B_k) m_i m_r m_u n_i n_k n_r
    n_u \right. \\ 
    \quad\quad\quad \left. \prod_{j \neq i} (1+n_j \theta)  \prod_{l
    \neq k} (1+n_l \theta)\prod_{s \neq r} (1+n_s \theta)\prod_{v \neq
    u} (1+n_v \theta)\right)\\     
    +(N-1)\left[\sum_{i=1}^M   \sum_{k=1}^M   \sum_{r=1}^M (m_r
    \bar{Y}_r^2-2m_r\bar{Y}_i \bar{Y}_r + m_r \bar{Y}_i \bar{Y}_k
    +B_r) m_i m_k n_i  n_k  n_r^2  \right. \\ 
    \left.  \prod_{j \neq i} (1+n_j \theta) \prod_{l \neq k}
    (1+n_l \theta) \prod_{s \neq r} (1+n_s \theta)^2 \right] . 
  \end{multline} 
  The factor $(1+n_{t}\theta)$ divides every summand in the above
  summations unless $t=i=k=r=u$, so it suffices to only consider these
  terms. Letting $t=i=k=r=u$, the terms missing a factor of $(1+n_{t}
  \theta)$ sum to a term we already encountered, namely, that
  in~(\ref{eq:num3}). 
  The discussion following display (\ref{eq:num3}) shows that if the
  data is generic and $m_{t} \geq 2$, then $(1+n_i \theta)$ does not
  divide the numerator given in~\eqref{eq:univarNum}.
  
  Continuing to work through the factors of the denominator
  from~\eqref{eq:univarDenom}, assume that $\theta_0$ is a root of
  $f_1(\theta)$.  Then everything vanishes in the numerator except for
  two terms $-g_1(\theta_0)f_Y(\theta_0)^2$ and
  $(N-1)f_Y(\theta_0)^2g_1(\theta_0)$, which add to
  $(N-2)f_Y(\theta_0)^2g_1(\theta_0)$. From the proof of
  Lemma~\ref{claim2}, we know $f_Y(\theta_0)^2g_1(\theta_0) \neq 0$
  for generic data, so since we are working under the assumption of at
  least two groups and at least one group size $n_i\geq2$, the
  numerator and $f_1(\theta)$ are relatively prime for generic data.
  
  Finally, we need to show
  \[
  H(\theta)
  :=f_1(\theta)^2g_{Y^2}(\theta)-2f_1(\theta)f_Y(\theta)g_Y(\theta) +
  f_Y(\theta)^2g_1(\theta)+f_1(\theta)^2g_{B/m}(\theta)
  \]
  and
  \[
  G(\theta)
  :=Wf_1(\theta)d_2(\theta)+F(\theta),
  \]
  are relatively prime for generic data $W$, $\bar Y_1, \ldots \bar
  Y_M$, and $B_i$ with $m_i\geq 2$; the polynomial $F(\theta)$ was
  defined in~(\ref{eq:relprime1}).  We will again denote the
  between-group sums of squares with multiplicities $m_i\ge 2$ as
  $B'_1, \ldots, B'_{M_2}$.  By a standard algebraic results, the
  polynomials $G(\theta)$ and $H(\theta)$ share a common root $\theta$
  if and only if a certain polynomial in their coefficients vanishes;
  this polynomial is called the resultant and we denote it by
  $\text{Res} (G,H)$.  Since both $H(\theta)$ and $G(\theta)$ have
  coefficients that are polynomials in the sufficient statistics $W$,
  $\bar Y_1, \ldots \bar Y_M$, and $B'_1,\dots,B'_{M_2}$, we may
  regard $\text{Res} (G,H)$ as a polynomial in the ring $\mathbb{C}[W,
  \bar Y_1, \ldots \bar Y_M, B'_1, \ldots, B'_{M_2}]$. By
  Lemma~\ref{claim2}, for any given generic choice of $\bar Y_1,
  \ldots, \bar Y_M, B'_1, \ldots, B'_{M_2}$, a root $\theta_0$ of $H$
  is not a root of $f_1(\theta)$ or $d_2(\theta)$.  Hence, $\theta_0$
  is a root of $G$ if and only if 
  \begin{equation}
    \label{eq:specialW}
    W=-\frac{F(\theta_0)}{d_2(\theta_0) f_1(\theta_0)}.
  \end{equation}
  Picking $W$ not to satisfy~(\ref{eq:specialW}) shows that
  $\text{Res} (G,H)$ is not the zero polynomial in $\mathbb{C}[W, \bar
  Y_1, \ldots \bar Y_M, B'_1, \ldots, B'_{M_2}]$.  Hence, the zero
  locus of $\text{Res} (G,H)$ is a set of lower dimension, and we
  conclude that $H$ and $G$ are relatively prime for generic data.
\end{proof}

\section{General mean structure in the one-way layout}
\label{sec:general-mean}

The one-way layout as specified in~(\ref{eq:model-oneway}) postulates
a common mean $\mu$ for all observations $Y_{ij}$.  Often the interest
is instead in a more general mean space.  Formally, consider the model
\begin{equation}
  \label{eq:model-general-mean}
  Y_{ij} = \mu_{ij}+\alpha_i + \varepsilon_{ij}, \qquad i=1,\dots,q, \quad
  j=1,\dots,n_i,
\end{equation}
where the random effects $\alpha_i\sim\mathcal{N}(0,\tau)$ and the
error terms $\varepsilon_{ij}\sim\mathcal{N}(0,\omega)$ are again all
mutually independent.  However, the array of means $(\mu_{ij})$ may
now belong to a linear subspace of $\mathbb{R}^N$ that we assumed to
be spanned by the independent columns of a full rank design matrix
$X\in\mathbb{R}^{N\times p}$; as before, $N=n_1+\dots+n_q$ is the
sample size.  In other words,
\begin{equation}
  \label{eq:model-general-mean2}
  \text{vec}(\mu_{ij}) =X \beta
\end{equation}
for some unknown (fixed) mean parameter vector $\beta\in\mathbb{R}^p$.

ML and REML estimation with more general mean structure can be
approached algebraically in the exactly the same way as before.  It is
convenient to reparametrize the covariance matrix in terms of
$\kappa=1/\omega$ and $\theta=\tau/\omega$.  For known covariance
parameters, the ML estimate $\hat\beta(\theta)$ of $\beta$ is obtained
by generalized least squares and depends on $\theta$ but not on
$\kappa$.  For fixed $\theta$, it is then also straightforward to
solve the ML or REML equations for $\kappa$.  This way we may reduce
algebraic solution of the likelihood equations to solving a single
rational equation in $\theta$.  In this section we demonstrate that
the involved algebraic computations are feasible in a larger example.
Before going into the details of the example, we would like to offer
the following conjecture based on numerical experiments with smaller
models and randomly chosen design matrices.  It states that the ML and
REML degrees for the model specified by (\ref{eq:model-general-mean})
and (\ref{eq:model-general-mean2}) cannot exceed the largest possible
respective degrees in the model with common mean $\mu$.  Recall that
the largest degrees arise in the entirely unbalanced case with group
sizes $n_1,\dots,n_q$ that are pairwise distinct.

\begin{conj}
  \label{conj:general-mean}
  For any design matrix $X\in\mathbb{R}^{N\times p}$ that has the vector
  $(1,\dots,1)^T$ in its column span $\text{span}(X)$, the ML degree
  for the one-way layout with mean space $\text{span}(X)$ and $q$
  random group effects is bounded above by $3q-3$.  Similarly, the
  REML degree is bounded above by $2q-3$.
\end{conj}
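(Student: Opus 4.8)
The plan is to follow the same two-step strategy that proves Theorem~\ref{thm:MAIN}: first reduce the likelihood equations to a single rational equation in $\theta$, then bound the degree of its numerator after clearing common factors, using the hypothesis $\mathbf 1_{N}\in\mathrm{span}(X)$ to force the required cancellations.

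First I would carry out the univariate reduction exactly as in Section~\ref{sec:deriv-likeqn}. Write the inverse covariance as $\kappa\,\tilde K(\theta)$ with $\tilde K(\theta)$ block diagonal, the $i$th block being $I_{n_i}-\frac{\theta}{1+n_i\theta}\mathbf 1_{n_i}\mathbf 1_{n_i}^{T}$, and let $D(\theta)=\prod_{i=1}^{M}(1+n_i\theta)$ run over the distinct group sizes. Then $Q(\theta):=D(\theta)X^{T}\tilde K(\theta)X$ and $v(\theta):=D(\theta)X^{T}\tilde K(\theta)Y$ are polynomial in $\theta$, the generalized least squares estimate is $\hat\beta(\theta)=Q(\theta)^{-1}v(\theta)$, the equation $\partial\ell/\partial\kappa=0$ has a unique rational solution $\hat\kappa(\theta)$, and substituting $\hat\beta(\theta),\hat\kappa(\theta)$ into $\partial\ell/\partial\theta=0$ gives a univariate rational equation whose numerator, after cancellation, has degree equal to the ML degree. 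The REML version is obtained by replacing $N$ with $N-p$ and adjoining the term $-\log\det\!\big(X^{T}\tilde K(\theta)X\big)$, exactly as in Proposition~\ref{prop:ell-reml}.

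Second, and this is the structural heart of the argument, I would exploit the orthogonal decomposition $\mathbb R^{N}=\mathcal B\oplus\mathcal W$ into the $q$-dimensional between-group space $\mathcal B=\mathrm{span}\{e_i\otimes\mathbf 1_{n_i}:i=1,\dots,q\}$ and the within-group space $\mathcal W=\mathcal B^{\perp}$. On $\mathcal W$ the matrix $\tilde K(\theta)$ is the identity, and on the $i$th coordinate of $\mathcal B$ it acts as the scalar $1/(1+n_i\theta)$; hence all $\theta$-dependence of the GLS projector and of $\det(X^{T}\tilde K X)$ is carried by the way $\mathrm{span}(X)$ meets $\mathcal B$. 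Choosing a basis of $\mathrm{span}(X)$ adapted to this splitting, one block-triangularizes $Q(\theta)$ via a Schur complement, so that $\det Q(\theta)$ equals, up to a power of $D(\theta)$ and a constant, the determinant of a $p_b\times p_b$ weighted normal-equation matrix $\tilde C+S(\theta)$, where $\tilde C$ is constant, $S(\theta)=\sum_{i=1}^{q}\frac{n_i}{1+n_i\theta}\,c_ic_i^{T}$, and $p_b$ is the dimension of the image of $\mathrm{span}(X)$ in $\mathcal B$; the hypothesis $\mathbf 1_{N}\in\mathrm{span}(X)$ forces $\mathbf 1_{q}=(1,\dots,1)$ to lie in that image. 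This reduces the whole computation to the ML and REML degrees of a $q$-point heteroscedastic Gaussian model: group averages $\bar Y_{g}$, $g=1,\dots,q$, with variances $(\omega+n_g\tau)/n_g$, a mean space of dimension $p_b$ that necessarily contains $(1,\dots,1)$, and a residual scale governed by the within-group sum of squares.

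Third, I would bound the ML and REML degrees of this reduced model in terms of $q$. When $p_b=1$ it is precisely the common-mean model of Theorem~\ref{thm:MAIN}, whose fully unbalanced instances already attain the degrees $3q-3$ and $2q-3$; when $p_b=q$ the residuals vanish identically and both degrees collapse. The remaining task is to show that enlarging the mean space from one to $p_b$ dimensions cannot push the degree past its value at $p_b=1$, and I expect this to be the main obstacle: adjoining mean parameters enlarges the parameter space, so no monotonicity is available for free, and the cancellations in the numerator now depend on how the $p_b$-dimensional mean space meets the hyperplanes $\{1+n_i\theta=0\}$ rather than on the explicit factor $d_1(\theta)$ used in Section~\ref{sec:MLproof}. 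A natural route is to express the degree as a signed Euler characteristic of the complement, in the $\theta$-line (or in $(\kappa,\theta)$-space), of the discriminant-type hypersurfaces that arise, and to bound that Euler characteristic combinatorially; but making this rigorous for an arbitrary mean space, and recovering the exact constants $3q-3$ and $2q-3$, is precisely the point at which the statement is currently only a conjecture.
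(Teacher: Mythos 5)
The statement you are addressing is stated in the paper only as a conjecture, supported by numerical experiments; the authors give no proof, so there is nothing to match your argument against. Your first two steps are sound and consistent with the paper's own discussion in Section~\ref{sec:general-mean}: the reduction to a single rational equation in $\theta$ via $\hat\beta(\theta)$ and $\hat\kappa(\theta)$, and the splitting of $\mathbb{R}^N$ into the between-group and within-group spaces on which $\tilde K(\theta)$ acts diagonally, so that $X^T\tilde K(\theta)X$ decomposes into a constant block plus $\sum_{g}\frac{n_g}{1+n_g\theta}c_gc_g^T$. But these steps only set up the problem; they do not bound the degree of the numerator after cancellation, which is the entire content of the conjecture.

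The decisive third step is absent, and the monotonicity you propose to establish there is in fact false. You write that it remains ``to show that enlarging the mean space from one to $p_b$ dimensions cannot push the degree past its value at $p_b=1$.'' For a fixed group-size configuration, the value at $p_b=1$ is $3M+M_2-3$ (resp.\ $2M+2M_2-3$) by Theorem~\ref{thm:MAIN}, and the paper's Example~\ref{ex:atkinson} is a counterexample: with $q=109$, $M=17$, $M_2=9$ the common-mean degrees are $57$ and $49$, yet adjoining two covariates raises them to $83$ and $71$. The conjectured bounds $3q-3=324$ and $2q-3=215$ are the maxima of the $p_b=1$ degrees over all group-size configurations with $q$ groups, not the degrees at $p_b=1$ for the configuration at hand. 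So any proof must explain why a general mean space can raise the degree above $3M+M_2-3$ (destroying the cancellations by $d_1(\theta)$ that drive Lemmas~\ref{claim1} and~\ref{claim1reml}) while never raising it above $3q-3$; your proposal identifies no mechanism for this, and you acknowledge as much in your final sentence. The proposal is therefore a reasonable research plan, not a proof.
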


According to this conjecture, the degrees would grow only linearly
with the number of groups, which would suggest that a moderately large
number of unbalanced groups can be handled in algebraic computations.

\begin{example}
  \label{ex:atkinson}
  With the goal of providing linguistic support for an African origin
  of modern humans, Atkinson \cite{Atkinson:2011} fits regression
  models to data on the phonemic diversity of languages.  The data,
  which can be obtained from the journal's online supplementary
  material, concern $N=504$ languages that are classified into $q=109$
  language families.  Besides quantitative summary measures of
  phonemic diversity, the available information includes the size of
  the population speaking each language and the distance between a
  chosen center for each language and an inferred origin in Africa,
  the latter being the main covariate of interest.
  
  One model of interest in this application is a one-way layout with
  groups corresponding to the language families.  The response
  $Y_{ij}$ is the phonemic diversity of the $j$th language in the
  $i$th family, which, as in (\ref{eq:model-general-mean}) and
  (\ref{eq:model-general-mean2}), is modelled as
  \begin{equation}
    \label{eq:atkinson}
    Y_{ij} = \beta_0+\beta_1 \log(\textit{P}_{ij}) + \beta_2 \textit{D}_{ij} +
  \alpha_i +\epsilon_{ij}, \qquad i=1,\dots,q, \quad
  j=1,\dots,n_i.
  \end{equation}
  Here, $\textit{P}_{ij}$ stands for the population size and
  $\textit{D}_{ij}$ is the distance from the origin in Africa.  As can
  be expected, the data is unbalanced.  The group sizes
  $n_1,\dots,n_{109}$ fall into the range from 1 to 62.  There are
  $M=17$ distinct group sizes of which $M_2=9$ have multiplicity two
  or larger.  Hence, by Theorem~\ref{thm:MAIN}, the one-way layout
  with all means equal has ML degree 57 and REML degree 49.  However,
  as we show next, the mean structure can affect the ML and REML
  degree.
  
  Computations we did using the software Maple show that the ML
  degree of the model given by~(\ref{eq:atkinson}) is 83, whereas the
  REML degree is 71.  Exact computations in analogy to the ones given
  in Example~\ref{ex:dyestuff} produce large integer coefficients, too
  large to display on paper but easily handled by a computer.  Solving
  the polynomial equations for ML and REML numerically, each equation
  is seen to have a unique positive root, namely,
  \begin{equation}
    \label{eq:atkinson-algebra}
    \hat\theta_{\text{ML}} \approx   0.3706 \quad\text{and}\quad
    \hat\theta_{\text{REML}} \approx 0.3853.
  \end{equation}
  Each root gives a local and, thus, global maximum of the concerned
  likelihood function.  We remark that the ML equation has twelve
  negative real roots.  The REML equation has no other real roots.
  Running the numerical optimizers implemented in the R package {\tt
    lme4} yields estimates that agree with (\ref{eq:atkinson-algebra})
  in all the given digits.  As in Example~\ref{ex:dyestuff}, the fact
  that our two univariate polynomials each have a unique positive root
  manifests itself in a single sign change in the coefficient
  sequence.  Finally, we remark that when omitting either the
  covariate $\log(\textit{P})$ or the covariate $\textit{D}$, the ML
  degree drops to 72 and the REML degree drops to 61.
\end{example}

\section{Balanced two-way layouts}
\label{sec:two-way}

Suppose we have observations $Y_{ijk}$ that are cross-classified
according to two factors and model the observations in an additive
two-way layout as
\begin{align}
  \label{eq:two-way-additive}
  Y_{ijk} &= \mu +\alpha_i+\beta_j +\epsilon_{ijk}, \\
  & \qquad\qquad
  i=1,\dots,r,\quad j=1,\dots,q, \quad
  k=1,\dots,n.  \notag
\end{align}
The terms $\alpha_i\sim\mathcal{N}(0,\tau_1)$ and
$\beta_j\sim\mathcal{N}(0,\tau_2)$ are normally distributed random
effects.  The error terms are distributed as
$\epsilon_{ijk}\sim\mathcal{N}(0,\omega)$, and all the random
variables $\alpha_i$, $\beta_j$ and $\epsilon_{ijk}$ are mutually
independent.  Finally, there is one (fixed) mean parameter
$\mu\in\mathbb{R}$.  A related model is obtained by including random
interaction terms $\gamma_{ij}\sim\mathcal{N}(0,\tau_{12})$ in the
defining equations
\begin{align}
  \label{eq:two-way-interaction}
  Y_{ijk} &= \mu +\alpha_i+\beta_j+\gamma_{ij} +\epsilon_{ijk}, \\
  & \qquad\qquad
  i=1,\dots,r,\quad j=1,\dots,q, \quad
  k=1,\dots,n.  \notag
\end{align}
The interaction terms $\gamma_{ij}$ are again mutually independent and
independent of all other random variables appearing on the right hand
side of (\ref{eq:two-way-interaction}).  

The models in (\ref{eq:two-way-additive}) and
(\ref{eq:two-way-interaction}) are balanced; the groups of
observations $Y_{ij1},\dots,Y_{ijn}$ specified by the different index
pairs $(i,j)$ are all of size $n$.  It is known that REML leads to
closed form estimates for each of the two balanced models; compare
\cite{Hocking:1985,Searle:1992,Sahai:I}.  In other words, the REML
degree of either model is one.  ML estimation, however, presents a
non-trivial algebraic problem.  The ML degree can be derived using
Gr\"obner basis calculations, and we see that balanced two-way layouts
have closed form ML estimates in the sense of Cardano's formula.

\begin{theorem}
  \label{thm:balanced-two-way}
  The ML degree of balanced additive two-way layout with random effects
  is four.  The same holds for the model with random interaction.
\end{theorem}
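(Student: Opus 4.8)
The plan is to exploit the balance of the design to reduce the likelihood equations to a small rational system and then count its solutions. Because the layout is balanced, the covariance matrix of the vectorised observations is a linear combination of the mutually orthogonal projections obtained by expanding $(P_r+Q_r)\otimes(P_q+Q_q)\otimes(P_n+Q_n)$, with $P_m=\tfrac1m\mathbf 1_m\mathbf 1_m^T$ and $Q_m=I_m-P_m$. Its distinct eigenvalues and their multiplicities can therefore be written down explicitly, and the mean vector $\mu\mathbf 1$ lies in the one-dimensional eigenspace for the largest eigenvalue $\lambda_0$. Passing to these canonical coordinates, twice the negative log-likelihood becomes, up to an additive constant, $\sum_k d_k\log\lambda_k+rqn(\bar Y-\mu)^2/\lambda_0+\sum_{k\ge1}\mathit{SS}_k/\lambda_k$, where $\bar Y$ is the grand mean and the $\mathit{SS}_k$ are the analysis-of-variance sums of squares, which take generic values for generic data. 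For the additive model the nontrivial eigenvalues are $\lambda_1=\omega+qn\tau_1$, $\lambda_2=\omega+rn\tau_2$, $\lambda_3=\omega$, with $\lambda_0=\lambda_1+\lambda_2-\lambda_3$; for the model with interaction one adds $n\tau_{12}$ to $\lambda_0,\lambda_1,\lambda_2$ and gains a further eigenvalue $\omega$, with $\lambda_0=\lambda_1+\lambda_2-\lambda_{12}$ where $\lambda_{12}=\omega+n\tau_{12}$. In each case $(\omega,\tau_1,\tau_2,\dots)\mapsto(\lambda_1,\lambda_2,\dots)$ is an invertible linear reparametrisation. The $\mu$-equation gives $\hat\mu=\bar Y$ regardless of the remaining parameters, and in the interaction model the equation for $\omega$ no longer couples to $\lambda_0$ and is solved immediately. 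What remains, in both models, is the same system of three rational critical equations in three unknowns $x_1,x_2,x_3$, with $x_0:=x_1+x_2-x_3$,
\[
\frac1{x_0}+\frac{a_1}{x_1}-\frac{b_1}{x_1^2}=0,\qquad
\frac1{x_0}+\frac{a_2}{x_2}-\frac{b_2}{x_2^2}=0,\qquad
-\frac1{x_0}+\frac{a_3}{x_3}-\frac{b_3}{x_3^2}=0,
\]
in each case with positive integers $a_i$ (the relevant multiplicities) and generic $b_i$ (the corresponding sums of squares). It suffices to show this system has four complex solutions for generic $b_i$.

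To count them, I would clear denominators to obtain three quadrics, noting that their only common zero with some coordinate equal to $0$ is the origin, which does not solve the original rational equations. Regarding $x_0=t$ as a parameter, each $x_i$ solves a monic quadratic, so $x_i=\tfrac12(\mp a_it\pm R_i)$ where $R_i^2$ is a quadratic in $t$ vanishing at $t=0$, and the relation $x_0=x_1+x_2-x_3$ becomes $ct=\varepsilon_1R_1+\varepsilon_2R_2+\varepsilon_3R_3$ with $c=2+a_1+a_2+a_3$ and signs $\varepsilon_i$. The product of $ct-\varepsilon_1R_1-\varepsilon_2R_2-\varepsilon_3R_3$ over all eight sign patterns is a polynomial in $t$; it is divisible by $t^4$ because each $R_i$ vanishes at $t=0$, and the quotient $\widetilde P(t)$ has degree exactly $4$, since its leading coefficient is $\prod_\varepsilon(c-\varepsilon_1a_1-\varepsilon_2a_2-\varepsilon_3a_3)$, a product of eight strictly positive numbers. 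A routine genericity argument matches the nonzero roots of $\widetilde P$ bijectively with the critical points of the likelihood, so the ML degree equals $\deg\widetilde P=4$; because the $a_i$ enter only through a nonvanishing constant, the value $4$ is the same for the additive layout and the layout with interaction. An equivalent finish, and presumably the one to write up, is to compute a Gr\"obner basis of the three-quadric system over the rational function field $\mathbb Q(b_1,b_2,b_3)$, saturate by $x_1x_2x_3$, and read off that the resulting ideal is zero-dimensional of degree $4$; Cardano's formula then applies.

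I expect the main obstacle to be the canonical reduction itself, together with the bookkeeping needed to make the count rigorous: one must verify that $\mu$ and the residual variance genuinely separate, so that both layouts land on the same three-equation system; that clearing denominators introduces only the excluded solution at the origin and neither loses nor creates critical points for generic data; and that $\widetilde P$ does not drop degree or acquire spurious roots. It is precisely in these last points that the linear relation $x_0=x_1+x_2-x_3$ among the eigenvalues and the positivity of the multiplicities $a_i$ do the real work.
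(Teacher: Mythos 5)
Your proposal is correct in outline and, in its ``equivalent finish,'' coincides with what the paper actually does: the paper quotes the known ML equations for the balanced layouts (from Searle), observes that $\hat\mu=\bar Y_{\centerdot\centerdot\centerdot}$ (and, in the interaction model, that $\hat\omega=\textit{SSE}/(rq(n-1))$ decouples), and then establishes the degree by clearing denominators with a saturation and computing a lexicographic Gr\"obner basis over the fraction field in $r$, $q$, $n$ and the sums of squares. What you add that the paper does not make explicit is worthwhile: the derivation of those equations from the tensor decomposition into the projections $P_m$, $Q_m$, and especially the observation that both models collapse to one and the same template of three rational equations in eigenvalues $x_1,x_2,x_3$ tied together by the linear relation $x_0=x_1+x_2-x_3$, with only the multiplicities $a_i$ and the sums of squares $b_i$ changing. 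This explains \emph{why} the two answers agree, rather than verifying it twice, and your sign-pattern product gives an explicit quartic resolvent in $t=x_0$, i.e.\ a computer-free route to Cardano-solvability. Your degree bookkeeping checks out: $x_i^2\pm a_itx_i\mp b_it=0$ gives $R_i^2=a_i^2t^2\pm4b_it$, the eight-fold product has degree $8$, is divisible by exactly $t^4$ for generic $b_i$, and has leading coefficient $\prod_\varepsilon(c-\varepsilon_1a_1-\varepsilon_2a_2-\varepsilon_3a_3)$ with every factor at least $2$.

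The one place where your argument is thinner than you suggest is the phrase ``a routine genericity argument matches the nonzero roots of $\widetilde P$ bijectively with the critical points.'' A solution of the system determines a root of $\widetilde P$ together with a sign pattern, and two distinct solutions sharing the same $t$ would force a multiple root of $\widetilde P$; conversely a root of $\widetilde P$ could a priori be realized by several sign patterns or by none that is consistent with the branch choices. So out of the box your construction only yields the two-sided bound (number of distinct nonzero roots) $\le$ (number of solutions) $\le\deg\widetilde P=4$, and closing the gap requires showing that $\widetilde P$ has four simple nonzero roots for generic $b_i$ --- for instance by exhibiting one numerical instance with four distinct solutions, or by checking that the discriminant of $\widetilde P$ is not identically zero in the $b_i$. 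That verification is exactly the content the paper outsources to the Gr\"obner basis/saturation computation, so either finish it by hand or fall back on the symbolic computation as you propose; as written, the bijection step is asserted rather than proved.
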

\begin{proof}
  Define the sum of squares
  \begin{align*}
    \textit{SSA} &= \sum_{i=1}^r qn(\bar Y_{i\centerdot \centerdot} -
    \bar Y_{\centerdot\centerdot\centerdot})^2,\\ 
    \textit{SSB} &= \sum_{j=1}^r rn(\bar Y_{\centerdot j\centerdot } -
    \bar Y_{\centerdot \centerdot \centerdot })^2,\\ 
    \textit{SSAB} &= \sum_{i=1}^r \sum_{j=1}^q 
    n(\bar Y_{ij\centerdot } -\bar Y_{i\centerdot \centerdot } - \bar Y_{\centerdot j\centerdot }
    +\bar Y_{\centerdot \centerdot \centerdot })^2,\\
    \textit{SSE} &= \sum_{i=1}^r \sum_{j=1}^q\sum_{k=1}^n
    ( Y_{ijk} -\bar Y_{ij\centerdot })^2,
  \end{align*}
  where we use the convention that the overbar indicates that an
  average was formed and the `$\centerdot$' subscripts specify which
  indices were averaged over.
  
  {\em (No interaction)\/} The ML equations for the additive model
  given by~(\ref{eq:two-way-additive}) are derived, for instance, in
  Chapter 4.7.d of \cite{Searle:1992} and in Chapter 3 of
  \cite{Sahai:I}.  One equation leads to the ML estimator
  \[
  \hat\mu = \bar Y_{\centerdot \centerdot \centerdot }.
  \]
  The rational equations for the 
  variance components may be written as
  \begin{align}
    \frac{rqn-r-q+1}{\omega} - \frac{1}{\omega+qn\tau_1+rn\tau_2} &=  
    \frac{\textit{SSAB}+\textit{SSE}}{\omega^2},\\
    \frac{r-1}{\omega+qn\tau_1} + \frac{1}{\omega+qn\tau_1+rn\tau_2} &=  
    \frac{\textit{SSA}}{(\omega+qn\tau_1)^2},\\
    \frac{q-1}{\omega+rn\tau_2} + \frac{1}{\omega+qn\tau_1+rn\tau_2} &=  
    \frac{\textit{SSB}}{(\omega+rn\tau_2)^2}.
  \end{align}
  Clearing the common denominators $\omega^2$, $(\omega+qn\tau_1)^2$,
  $(\omega+rn\tau_2)^2$, and $\omega+qn\tau_1+rn\tau_2$ gives a
  polynomial equation system.  However, multiplying each equation with
  the relevant product of these denominators introduces new solutions
  that are not solutions of the original rational equations.  Using
  saturation as explained in Chapter 2 of \cite{Drton:book}, we can
  remove these extraneous solutions and obtain a polynomial equation
  system of degree 4.  (We remark that software such as Maple is
  able to produce a lexicographic Gr\"obner basis over the field of
  fractions in $r$, $q$, $n$, and the four sums of squares.)
  
  {\em (With interaction)\/} Chapter 4.7.d of \cite{Searle:1992} also
  gives the ML equations for the model with interaction defined
  by~(\ref{eq:two-way-interaction}); see also Chapter 4 of
  \cite{Sahai:I}.  Two equations determine the ML estimators
  \[
  \hat\mu = \bar Y_{\centerdot \centerdot \centerdot }, \qquad
  \hat\omega = \frac{\textit{SSE}}{rq(n-1)}.
  \]
  The rational equations for the remaining variance components can be
  written as
  \begin{align}
    \frac{(r-1)(q-1)}{\hat\omega+n\tau_{12}} -
    \frac{1}{\hat\omega+qn\tau_1+rn\tau_2+n\tau_{12}} &=   
    \frac{\textit{SSAB}}{(\hat\omega+n\tau_{12})^2},\\
    \frac{r-1}{\hat\omega+qn\tau_1+n\tau_{12}} +
    \frac{1}{\hat\omega+qn\tau_1+rn\tau_2+n\tau_{12}} &=   
    \frac{\textit{SSA}}{(\hat\omega+qn\tau_1+n\tau_{12})^2},\\
    \frac{q-1}{\hat\omega+rn\tau_2+n\tau_{12}} +
    \frac{1}{\hat\omega+qn\tau_1+rn\tau_2+n\tau_{12}} &=   
    \frac{\textit{SSB}}{(\hat\omega+rn\tau_2+n\tau_{12})^2}.
  \end{align}
  Clearing the denominators carefully via saturation yields a
  polynomial equation system of degree 4.  (Again, a lexicographic
  Gr\"obner basis can be obtained with $r$, $q$, $n$ and the sums of
  squares as parameters to the equations.)
\end{proof}

We briefly illustrate algebraic computation of the ML estimators in an example
that involves the additive two-way layout.

\begin{example}
  \label{ex:penicillin}
  The R package {\tt lme4} contains data from experiments for an
  assessment of the variability between samples of penicillin.  The
  data are described in detail in \cite{Davies:1972}.  The response is
  a diameter measurement of the zone in which growth of an organism
  is inhibited by the penicillin.  The experiments are
  cross-classified according to the assay plate and the penicillin
  sample used.  The former is a factor with $r=24$ levels, the latter
  has $q=6$ levels. There are no replications to be considered in this
  case, that is, $n=1$.  We will consider the additive model for which
  the relevant sums of squares are
  \begin{align*}
  \textit{SSA}&= 105 \tfrac{8}{9}, &  
  \textit{SSB}&= 449 \tfrac{2}{9}, &  
  \textit{SSAB}+\textit{SSE} &= 34 \tfrac{7}{9}.
  \end{align*}
  
  Using the saturation computation alluded to in the proof of
  Theorem~\ref{thm:balanced-two-way}, we obtain the 
  polynomial equation system
  \begin{align*}
    204808595904\, \omega^4 -1801205257140\, \omega^3+2545119731943\,
    \omega^2-1070402996440\, \omega\\  
    \notag
    +139045932165 &=0,\\ 
    2481278604010272\, \tau_1+507582172417738176\,
    \omega^3 -4309720916424828084\, \omega^2 \quad\\
    \notag
    +
    4998133978544934251\, \omega-1133204709683307975 &=0,\\ 
    2481278604010272\, \tau_2+
    534435082556924736\, \omega^3-4538697213124439100\,
    \omega^2 \quad\\
    \notag +5270402449572117709\, \omega-1201351121037374475&=0. 
  \end{align*}
  This polynomial system has the same solution set as the original
  rational ML equations.  The polynomials on the left hand sides of
  the equations form a lexicographic Gr\"obner basis and are readily
  solved.  First, solve the quartic equation in $\omega$.  Next, plug
  each of the four solutions for $\omega$ into the other two equations
  and solve the resulting linear equations for $\tau_1$ and $\tau_2$,
  respectively.  In the present example, all four solutions are real
  but only one is feasible with $\omega,\tau_1,\tau_2\ge 0$.  This
  solution is
  \begin{align*}
  \hat\omega &= 0.302425, & 
  \hat\tau_1 &= 0.714992, & 
  \hat\tau_2 &= 3.135188.
  \end{align*}
  It defines the unique global maximum of the likelihood function.
\end{example}

\section{Conclusion}
\label{sec:conclusion}

This paper takes a first step towards understanding the algebraic
complexity of ML and REML estimation in linear mixed models.  Our main
results in Theorem~\ref{thm:MAIN} concern the unbalanced one-way
layout with common mean for all observations.  It would be interesting
to generalize the results to one-way classifications with more
complicated mean spaces; recall Conjecture~\ref{conj:general-mean}.
Similarly, it would be interesting to study unbalanced two- and
higher-way layouts, although these models would require more
sophisticated mathematical treatment because it is no longer possible
to analyze a single univariate rational equation; compare
Section~\ref{sec:two-way}.

A remarkable feature common to Examples~\ref{ex:dyestuff}
and~\ref{ex:atkinson} is that Descartes' rule of sign applied to a
univariate polynomial in the variance ratio $\theta$ reveals that
there is a unique feasible solution to the ML/REML equations.  The
same was true for many other examples of unbalanced one-way
classifications that we computed.  This said, we also saw cases with
more than one sign change and the number of positive solutions for
$\theta$ not matching up with the sign changes.

To our knowledge, the literature does not supply many examples of
linear mixed models with multimodal likelihood functions.  We conclude
by giving two simulated examples that demonstrate the mathematical
possibility of more than one mode.  Such examples were rare in our
simulations, which is in agreement with findings of
\cite{Swallow:1984} who also treat the unbalanced one-way layout.
While uniqueness of local optima is not explicitly discussed in
\cite{Swallow:1984}, the authors remark in their conclusion that
``varying the iteration starting point slightly affects the rate of
convergence, but not the [mean square errors] or biases of the
[ML and REML] estimators.''  The examples we give involve three
positive roots to the ML or REML equations for the variance ratio
$\theta$.  We do not know of examples with more positive roots.

\begin{example}
  \label{ex:bimodal-ML}  
  Consider the one-way layout with a single grand mean $\mu$ from
  (\ref{eq:model-oneway}).  Take $q=5$ groups of sizes
  \begin{align*}
    n_1&=2, &
    n_2&=5, &
    n_3&=10, &
    n_4&=20, &
    n_5&=50.
  \end{align*}
  Let the sufficient statistics be the five group averages
  \begin{equation*}
    \arraycolsep 0.1cm
    \begin{array}{rcrlr@{\qquad\qquad}rcrlr} 
      \bar Y_1 &=& -\tfrac{73571}{14273} &\approx& -5.1546,&
      \bar Y_2 &=& \tfrac{13781}{78326} &\approx &0.1759, \\[0.2cm]
      \bar Y_3 &=& -\tfrac{13277}{92152} &\approx& -0.1441,&
      \bar Y_4 &=& \tfrac{31207}{202567} &\approx& 0.1541, \\[0.2cm]
      \bar Y_5 &=& -\tfrac{15713}{24121} &\approx& -0.6514,
    \end{array}
  \end{equation*}
  and the within-group sum of squares
  \[
  W = \tfrac{116487}{421} \approx 276.69.
  \]
  The univariate ML equation in $\theta$ has three nonnegative
  solutions, namely,
  \[
  \hat\theta_{\text{ML},1}\approx 0.00838738, \qquad
  \hat\theta_{\text{ML},2}\approx 0.118458, 
  \qquad \hat\theta_{\text{ML},3}\approx 0.338944;
  \]
  having specified six digits we should add that the solutions were
  computed treating the above rational fractions as the input.  The
  solution $\hat\theta_{\text{ML},1}$ yields the global maximum of the
  likelihood function, whereas $\hat\theta_{\text{ML},2}$ and
  $\hat\theta_{\text{ML},3}$ determine a saddle point and local
  maximum, respectively.  In contrast, the restricted likelihood
  function has a unique local and global maximum for
  \[
  \hat\theta_{\text{REML}}\approx 0.771763.
  \]
  The data was simulated from the model with mean $\mu_0=0$, and
  variance components $\tau_0=3$ and $\omega_0=2$, which gives
  $\theta_0=3/2$.
\end{example}

\begin{example}
  \label{ex:bimodal-REML}  
  Continuing with the setup from Example~\ref{ex:bimodal-ML}, change the
  sufficient statistics to
  \begin{equation*}
    \arraycolsep 0.1cm
    \begin{array}{rcrlr@{\qquad\qquad}rcrlr} 
      \bar Y_1 &=& \tfrac{230081}{40206} &\approx& 5.7226,&
      \bar Y_2 &=& \tfrac{721282}{5630371} &\approx &0.1281, \\[0.2cm]
      \bar Y_3 &=& \tfrac{29305}{95646} &\approx& 0.3064,&
      \bar Y_4 &=& \tfrac{15365}{37988} &\approx& 0.4045, \\[0.2cm]
      \bar Y_5 &=& -\tfrac{569}{40932} &\approx& -0.0139,
    \end{array}
  \end{equation*}
  and 
  \[
  W = \tfrac{755002}{1759} \approx 429.22.
  \]
  Now, all real solutions to the ML equations are negative.  Thus, the
  global maximum of the likelihood is achieved at the boundary point
  $\hat\theta_{\text{ML}}=0$.  In contrast, the REML equations have three
  feasible solutions for $\theta$, namely,
  \[
  \hat\theta_{\text{REML},1}\approx 0.00492193,\qquad
  \hat\theta_{\text{REML},2}\approx 0.159465,
  \qquad \hat\theta_{\text{REML},3}\approx 0.2414611.
  \]
  The solution $\hat\theta_{\text{REML},1}$ gives the global maximum
  of the restricted likelihood function.  The solutions
  $\hat\theta_{\text{REML},2}$ and $\hat\theta_{\text{REML},3}$
  determine a saddle point and a local maximum, respectively.  The
  data was simulated as in Example~\ref{ex:bimodal-ML}.
\end{example}

Readers experimenting with the two examples just given will find the
likelihood functions to be rather flat between the three stationary
points, which give log-likelihood values that differ by less than 0.1.

In both Example~\ref{ex:bimodal-ML} and Example~\ref{ex:bimodal-REML},
the first group is of the smallest size but has group mean that is
largest in absolute value.  The other means are comparatively close to
each other.  We experimented with permuting the means, while holding
the group sizes fixed.  In Example~\ref{ex:bimodal-REML}, eight out of
120 permutations give bimodal restricted likelihood functions.  Two
permutations yield three positive roots to the REML equations.  The
other six cases have two positive roots, and one of the two local
maxima occurs for $\theta=0$.  The eight permutations generate the
group of permutations that keep the first mean fixed.  In this
example, there is clearly negative correlation between the group sizes
$n_i$ and the group means $\bar Y_i$.  (In practice, such dependence
could arise from selection effects.)  The eight permutations of
interest turn out to give the eight most negative correlations between
group sizes and means.  In similar experiments for
Example~\ref{ex:bimodal-ML}, which features positive correlation
between group sizes and means, bimodal likelihood functions are
obtained for 18 permutations.  Again, these permutations keep the
first mean fixed.  Only three permutations give three positive roots
to the ML equations.  The 18 permutations include the top six
permutations in terms of large positive correlation but also
the permutation whose associated correlation ranks 43rd.

While dependence between group means and sizes plays a role in
Examples~\ref{ex:bimodal-ML} and~\ref{ex:bimodal-REML}, the precise
interplay between them appears to be subtle.  For instance, when
varying the mean $\bar Y_1$ in Example~\ref{ex:bimodal-ML} and keeping
all other sufficient statistics fixed, we find that there are three
positive roots to the ML equations when $-5.47\le \bar Y_1\le -5.08$
but a unique root otherwise; we experimented with a grid of values in
$[-10,10]$.  In particular, the likelihood function is unimodal for
larger negative values of $\bar Y_1$.  It would be interesting, but
presumably difficult, to get a better understanding of the
semi-algebraic set of sufficient statistics that give (restricted)
likelihood functions with more than one local maximum.

\bibliographystyle{amsalpha}
\bibliography{randeff}

\end{document}